\documentclass[oneside]{amsart}
\usepackage{epsfig,amsmath,latexsym,amssymb,amsfonts,amsthm,cite}
\usepackage{graphicx}
\usepackage{color,bm}
%\graphicspath{{../img/}}
\usepackage[all]{xy}
\usepackage[T1]{fontenc}
\newtheorem{theorem}{Theorem}[section]
\newtheorem{lemma}[theorem]{Lemma}

\newtheorem{example}{Example}
\newtheorem{corollary}[theorem]{Corollary}
\newtheorem{main}{Theorem}

\newtheorem{main_cor}[main]{Corollary}

\def\T{\mathbb{T} } 

\def\R{\mathbb{R} } 
\def\Z{\mathbb{Z} } 
\def\nbd{neighborhood } 
\def\nbds{neighborhoods } 
\def\R{\mathbb{R} } 
 
\def\Sv{\mathop{\mathrm{Sing}}(v)} 
\def\Pv{\mathop{\mathrm{Per}}(v)} 
\def\Cv{\mathop{\mathrm{Cl}}(v)}

\def\-{\ominus} 
\def\+{\oplus} 
\def\0{\circ}

\usepackage{color,bm}

\title[Flows with time-reversal symmetric limit sets on surfaces]{Flows with time-reversal symmetric limit sets on surfaces}
% with totally disconnected singular point sets}
\author{Tomoo Yokoyama}
\date{\today}
\address{Applied Mathematics and Physics Division, Gifu University, Yanagido 1-1, Gifu, 501-1193, Japan\\}
\email{tomoo@gifu-u.ac.jp}
\thanks{The author was partially supported by JSPS Grant Number 20K03583}
% and 21H00980}
\makeatletter
\@namedef{subjclassname@2020}{%
  \textup{2020} Mathematics Subject Classification}
\makeatother

\subjclass[2020]{Primary 37E35; Secondary 70H33, 37B45}

%37B20  	Notions of recurrence and recurrent behavior in dynamical systems
%37E35  	Flows on surfaces
%37B45: "Continua theory in dynamics" (MSC2020)
%70H33  	Symmetries and conservation laws, reverse symmetries, invariant manifolds and their bifurcations, reduction for problems in Hamiltonian and Lagrangian mechanics
%76M60  	Symmetry analysis, Lie group and Lie algebra methods applied to problems in fluid mechanics
%34C05  	Topological structure of integral curves, singular points, limit cycles of ordinary differential equations
%58D19  	Group actions and symmetry properties
	
\keywords{Surface flow, Time-reversal symmetric conditions, Lakes of Wada}

%\thanks{The author is partially supported 
%by the JST PRESTO Program at Department of Mathematics, 
%Kyoto University of Education.}

%\thanks{The author is partially supported by JST PRESTO Grant Number JPMJPR16ED and JSPS Kakenhi(C) Grant Number 20K03583}
% at the Department of Mathematics, Kyoto University of Education.}

\begin{document}

\maketitle

\begin{abstract}
The Long-time behavior of orbits is one of the most fundamental properties in dynamical systems. Poincar\'e studied the Poisson stability, which satisfies a time-reversal symmetric condition, to capture the property of whether points return arbitrarily near the initial positions after a sufficiently long time. Birkhoff introduced and studied the concept of non-wandering points, which is one of the time-reversal symmetric conditions. Moreover, minimality and pointwise periodicity satisfy the time-reversal symmetric condition for limit sets. This paper characterizes flows with the time-reversal symmetric condition for limit sets, which refine the characterization of irrational or Denjoy flows by Athanassopoulos. Using the description, we construct flows on a sphere with Lakes of Wada attractors and with an arbitrarily large number of complementary domains, which are flow versions of such examples of spherical homeomorphisms constructed by Boro\'{n}ski, \v{C}in\v{c}, and Liu.
\end{abstract}
%149 語 (987 文字です)
%147 語 (985 文字です)

\section{Introduction}

The Long-time behavior of orbits is one of the most fundamental properties in dynamical systems. 
In \cite{poincare1890,poincare1899}, Poincar{\'e} studied the Poisson stability, which satisfies a time-reversal symmetric condition, to capture a property whether points return arbitrarily near the initial positions after a sufficiently long time. 
%Such a property is called Poisson stability. 
In \cite{birkhoff1927dynamical}, Birkhoff introduced and studied the concept of non-wandering points, which is one of time-reversal symmetric conditions, by introducing the concepts of $\omega$-limit set and $\alpha$-limit set of a point. 
Moreover, minimality and pointwise periodicity satisfy the time-reversal symmetric condition for limit sets. 
%Moreover, some dynamics properties (e.g. minimality, pointwise periodicity, non-wandering property) satisfy time-reversal symmetric conditions. 
%Cherry showed that the set of orbits contained in the closure of a non-closed recurrent orbit of a flow on a manifold contains uncountably many Poisson orbits \cite{cherry1937topological}. 
Athanassopoulos characterized a flow that is either irrational or Denjoy on a closed surface by using a  time-reversal symmetric condition for non-closed Poisson stable orbits \cite{athanassopoulos1992characterization}. 
In addition, the $\omega$-limit sets and $\alpha$-limit sets of points for discrete dynamics on surfaces can be wild in general. 
For instance, Lakes of Wada continua, whose existence is first shown by  Brouwer \cite[p.427]{brouwer1910analysis},  appear naturally as attractors in the study of such discrete dynamical systems on surfaces \cite{coudene2006pictures,Hubbard1995henon,kennedy1991basins,plykin1974sources}. 
Furthermore, there was the long-standing problem whether Lakes of Wada continua could arise in complex dynamics, which was the analogue of a question of Fatou \cite[pp.51--52]{fatou1920equations} concerning Fatou components of rational functions.
The existence is recently demonstrated by Mart{\'\i}-Pete, Rempe, and Waterman \cite{marti2021wandering}. 

In this paper, we show the existence of Lakes of Wada continua for continuous dynamics with the time-reversal symmetric condition for limit sets on compact surfaces, and characterize flows with the time-reversal symmetric condition for limit sets, which generalizes the characterization of irrational or Denjoy flows by Athanassopoulos.
%refine the characterization of irrational or Denjoy flows into one of the flows with time-reversal symmetric limit sets. 
To describe the statement, we introduce some concepts as follows. 
A point $x$ has a time-reversal symmetric limit set if $\alpha(x) = \omega(x)$. 
A subset (resp. flow) has time-reversal symmetric limit sets if any point in it (resp. in the whole space) has a time-reversal symmetric limit set. 
%A flow has time-reversal symmetric limit sets if each point has a time-reversal symmetric limit set. 
A non-recurrent orbit is connecting quasi-separatrix if both the $\omega$-limit and the $\alpha$-limit sets are contained in a boundary component of the singular point set.
A closed connected invariant subset is a non-trivial quasi-circuit if it is a boundary component of an open annulus, contains a non-recurrent point, and consists of non-recurrent points and singular points. 
A non-trivial quasi-circuit is a limit quasi-circuit if it is the $\omega$-limit or $\alpha$-limit set of a point outside of it. 
Then we have the following characterization of flows with time-reversal symmetric limit sets. 

\begin{main}\label{main:01}
A flow on a compact connected surface has time-reversal symmetric limit sets if and only if one of the following statements holds exclusively: 
\\
{\rm(1)} The flow is an irrational flow. 
\\
{\rm(2)} The flow is a Denjoy flow. 
\\
{\rm(3)} The surface is either a torus or a Klein bottle, and there is a limit cycle whose complement is a transverse annulus.
\\
{\rm(4)} Each recurrent orbit is closed, and each non-recurrent orbit is either a connecting quasi-separatrix with a time-reversal symmetric limit set or is contained in an invariant open transverse annulus with time-reversal symmetric limit sets which are limit quasi-circuits. 
%connected component of the complement of the union of  limit quasi-circuits is either an invariant subset consisting of closed orbits or an invariant open transverse annulus with time-reversal symmetric limit sets. 

In case {\rm(4)}, the periodic point set is open, and the $\omega$-limit and $\alpha$-limit sets of any non-recurrent point are either contained in the singular point set or a limit quasi-circuit. 
\end{main}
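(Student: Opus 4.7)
The plan is to treat the two implications separately. For the ``if'' direction, I would verify each case in turn: in an irrational flow every orbit is dense, so $\alpha(x)=\omega(x)$ equals the whole surface; in a Denjoy flow the unique minimal Denjoy continuum is both the $\alpha$- and $\omega$-limit set of every orbit off it, while orbits on it are Poisson stable; case (3) is immediate because every non-closed orbit spirals towards the limit cycle in both time directions inside the transverse annulus; and case (4) follows directly from the description of connecting quasi-separatrices and limit quasi-circuits given in the statement.

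For the ``only if'' direction I would dichotomize on whether a non-closed recurrent orbit exists. If one does, Athanassopoulos's characterization recalled in the introduction forces the flow to fall into (1), (2), or (3). Otherwise every recurrent orbit is either singular or periodic, and I aim for case (4). Using standard structure results for surface flows, each closed orbit admits a neighbourhood that is an invariant flow-box annulus, so the periodic point set is open; consequently the $\omega$- and $\alpha$-limit set of any non-recurrent point is a connected invariant closed set built entirely from singular points and closed orbits.

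The substantive step is to analyse a non-recurrent orbit under the hypothesis $\alpha(x)=\omega(x)$. If the common limit set meets no closed orbit, connectedness confines it to a single boundary component of the singular point set, yielding the connecting quasi-separatrix alternative. Otherwise I would locate a maximal invariant open transverse annulus containing the orbit of $x$; the symmetry then forces each boundary component of this annulus to arise as a common $\alpha=\omega$ set, making it a limit quasi-circuit and yielding the second alternative in (4). The final assertion of the theorem follows by inspection of this case analysis.

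The main obstacle is the construction of the invariant transverse annulus in the last case: one must show that, under the symmetry hypothesis, the non-recurrent orbits accumulating on a common limit set organise into a single annulus with consistent transversal structure, ruling out more exotic invariant continua woven from singular points and non-recurrent connections. Once this structural lemma is in place, the rest of the argument, including the final assertion about $\omega$- and $\alpha$-limit sets being either contained in the singular point set or a limit quasi-circuit, is essentially bookkeeping.
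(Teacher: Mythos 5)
Your skeleton --- dichotomize on the existence of a non-closed recurrent orbit, invoke Athanassopoulos's characterization in the recurrent case, and otherwise analyse non-recurrent orbits under $\alpha(x)=\omega(x)$ --- matches the paper's strategy. But there are two genuine gaps.

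First, your case analysis mishandles case {\rm(3)}. Athanassopoulos's theorem (the paper's first lemma) only yields {\rm(1)} or {\rm(2)} from the existence of a non-closed recurrent orbit; a flow in case {\rm(3)} has \emph{no} non-closed recurrent orbit, so it falls into your ``otherwise'' branch. There your claim that ``each closed orbit admits a neighbourhood that is an invariant flow-box annulus, so the periodic point set is open'' fails precisely when a periodic orbit is a limit cycle. The paper must first prove a separate lemma: a limit cycle with an invariant neighbourhood having time-reversal symmetric limit sets forces the surface to be a torus or Klein bottle with the complement a transverse annulus (this is where case {\rm(3)} comes from), and only \emph{after} excluding limit cycles does it deduce openness of $\mathop{\mathrm{Per}}(v)$, citing a prior result. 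Your proposal never isolates the limit cycle case, so both the derivation of {\rm(3)} and the openness claim are unsupported.

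Second, and more seriously, the step you yourself flag as ``the main obstacle'' is the actual content of the theorem, and you leave it entirely open. Ruling out ``more exotic invariant continua woven from singular points and non-recurrent connections'' is the paper's Lemma~\ref{lem:03}: a flow with time-reversal symmetric limit sets has no quasi-Q-set consisting of singular and non-recurrent points. Its proof is a substantial argument --- passing to the end completion of the complement of $\mathop{\mathrm{Sing}}(v)\cap Q$, comparing it with the metric completion and a collapsed quotient via Lemma~\ref{lem:end_collapse}, and deriving a contradiction from infinitely many loops built from non-recurrent orbits crossing an essential closed transversal, using the finiteness of genus and of identified ends. Likewise, the construction of the invariant open transverse annulus around a limit quasi-circuit (the paper's Lemma~\ref{lem:01}(2)) requires building nested closed transversals from first-return maps and assembling flow boxes into an annulus; it is not ``bookkeeping.'' Without these two lemmas your argument does not close, and the remaining classification of limit sets of non-recurrent points (nowhere dense in $\partial\mathop{\mathrm{Sing}}(v)$ versus limit quasi-circuit) also depends on an external structure theorem that you would need to cite explicitly.
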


Notice that limit quasi-circuits for flows with time-reversal symmetric limit sets on compact connected surfaces can become like Lakes of Wada continua (see Example~\ref{ex:wada04} for details). 
Moreover, from these constructions, cutting closed transversals and collapsing the new boundary components, we can construct flows on a sphere with Lakes of Wada attractors and with an arbitrarily large number of complementary domains which are flow versions of such attractors of spherical homeomorphisms constructed by Boro\'{n}ski, \v{C}in\v{c}, and Liu \cite{boronski2020prime} and such an attractor of a transcendental entire function constructed by Mart{\'\i}-Pete, Rempe and Waterman \cite{marti2021wandering}. 
In other words, we have the following statement. 

\begin{main_cor}\label{main:02}
For any integer $N \geq 2$, there is a flow on a sphere with the Lakes of Wada attractor and with $N$ complementary domains. 
\end{main_cor}

The present paper consists of four sections.
In the next section, as preliminaries, we introduce fundamental concepts.
In \S 3, we topologically characterize flows with time-reversal symmetric limit sets. 
In the final section, some examples are illustrated to describe the time-reversal symmetric condition for limit sets. 

\section{Preliminaries}

\subsection{Topological notion}

A surface is a two-dimensional paracompact manifold with or without boundary. 

Recall that a one-dimensional cell complex $\gamma$ is essential in a compact surface $S$ if and only if $\gamma$ is not null homotopic in $S^*$, where $S^*$ is the resulting surface from $S$ by collapsing all boundary components into singletons. 
We call that a subset of a surface $S$ is essential if it is not null homotopic in the resulting surface $S^*$ from $S$ by collapsing all boundary components into singletons.
Here a singleton is a set consisting of a point.

\subsubsection{End completions}

Consider the direct system $\{K_\lambda\}$ of compact subsets of a topological space $X$ and inclusion maps such that the interiors of $K_\lambda$ cover $X$.  
There is a corresponding inverse system $\{ \pi_0( X - K_\lambda ) \}$, where $\pi_0(Y)$ denotes the set of connected components of a space $Y$. 
Then the set of ends of $X$ is defined to be the inverse limit of this inverse system. 
Notice that $X$ has one end $x_{\mathcal{U}}$ for each sequence $\mathcal{U} := (U_i)_{i \in \mathbb{Z}_{>0}}$ with $U_i \supseteq U_{i+1}$ such that $U_i$ is a connected component of $X - K_{\lambda_i}$ for some $\lambda_i$, where $A - B$ is used instead of $A \setminus B$ when $A \subseteq B$. 
Considering the disjoint union $X_{\mathrm{end}}$ of $X$ and  $\{ \pi_0( X - K_\lambda ) \}$ as a set, a subset $V$ of the union $X_{\mathrm{end}}$ is an open \nbd of an end $x_{\mathcal{U}}$ if there is some $i \in \mathbb{Z}_{>0}$ such that $U_i \subseteq V$. 
Then the resulting topological space $X_{\mathrm{end}}$ is called the end completion (or end compactification) of $X$, introduced by Freudenthal \cite{Freudenthal1931end} (cf. \cite[Definition~1.6]{raymond1960end}). 
Note that the end completion is not compact in general. 

Let $T$ be an open connected surface. 
From \cite[Theorem 3]{richards1963classification}, the open surface $T$ is homeomorphic to the resulting surface from a compact surface by removing a closed totally disconnected subset and so the end completion $T_{\mathrm{end}}$ of $T$ is a compact surface. 
%%
%We call that a loop in $T$ bounds an end $e \in T_{\mathrm{end}} - T$ if there is an open disk containing $e$ whose boundary is the loop. 
%A loop $\gamma$ in $T$ is contractible to an end $e \in T_{\mathrm{end}} - T$ if there is a sequences $\mathcal{U} = (U_i)_{i \in \mathbb{Z}_{>0}}$ defining $e$ and there is an open disk $D \subset T$ with $\gamma = \partial D$ with $\bigcup_{n\geq N} U_n \subset D$ for some $N \in \mathbb{Z}_{>0}$. 

\subsubsection{Metric completion}\label{sec:mc}

The metric completion $X_{\mathrm{cpl}}$ of a metric space $(X, d_X)$ is the set of equivalence classes of Cauchy sequences in $X$, where the equivalence relation $\sim$ is defined as $(x_n)_{n \geq 0} \sim (y_n)_{n \geq 0}$ if $\lim_{n \to \infty} d \left(x_n, y_n\right) = 0$. 
It is  endowed with a complete metric $d_{\hat X}$ given by $d_{\hat X} \left([(x_n)_{n\geq 0} ], [(y_n)_{n\geq 0} ]\right)= \lim _{n\to \infty}d (x_n , y_n)$, where $[(x_n)_{n\geq 0}]$ is the equivalent class of $(x_n)_{n\geq 0}$  (cf.~\cite{kelley1975general}).

Let $S$ be a compact surface, $\Gamma$ a closed subset of $S$. 
Define a distance $d$ on the complement $T := S - \Gamma$ as follows: 
$d(x,y)$ is the infimum of the lengths of paths from $x$ to $y$. 
The canonical quotient map $p_{\mathrm{cpl}} \colon T_{\mathrm{cpl}} \to S$ is locally injective and the restriction $p_{\mathrm{cpl}}|_{T}$ is an inclusion. 
%
%\subsubsection{Filling of an open subset in a surface}\label{sec:fill}
%
%Define the filling $\mathrm{Fill}_{S}(U) \subseteq S$ of for an open subset $U$ on a surface $S$ as follows: $p \in \mathrm{Fill}_{S}(U)$ if there is an open disk $D$ containing $p$ such that the boundary $\partial D \subset U$ is a simple closed curve and is null homotopic in $S$. 

\subsection{Notion of dynamical systems}
By a flow, we mean a continuous $\mathbb{R}$-action on a surface. 
Let $v \colon  \R \times S \to S$ be a flow on a compact surface $S$. 
Then $v_t := v(t, \cdot)$ is a homeomorphism on $S$. 
A subset of $S$ is invariant (or saturated) if it is a union of orbits. 
%The saturation $\mathrm{Sat}_v(A)$ of a subset $A \subseteq S$ is the union of orbits intersecting $A$. 
An invariant closed subset is minimal if it has no non-empty proper invariant closed subsets. 
For an invariant closed set $\gamma$, define the stable manifold $W^s(\gamma) := \{ y \in X \mid \omega(y) \subseteq  \gamma \}$ and the unstable manifold $W^u(\gamma) := \{ y \in X \mid \alpha(y) \subseteq  \gamma \}$.
A point $x$ of $S$ is singular if $x = v_t(x)$ for any $t \in \R$, is periodic if there is a positive number $T > 0$ such that $x = v_T(x)$ and  $x \neq v_t(x)$ for any $t \in (0, T)$, and is closed if it is either singular or periodic. 
Denote by $\mathop{\mathrm{Sing}}(v)$ (resp. $\mathop{\mathrm{Per}}(v)$, $\mathop{\mathrm{Cl}}(v)$) the set of singular (resp. periodic, closed) points. 
%
%A point is wandering if there are its neighborhood $U$ and a positive number $N$ such that $v_t(U) \cap U = \emptyset$ for any $t > N$. Then such a neighborhood is called a wandering domain. 
%A point is non-wandering if it is not wandering (i.e. for any its neighborhood $U$ and for any positive number $N$, there is a number $t \in \mathbb{R}$ with $|t| > N$ such that $v_t(U) \cap U \neq \emptyset$).
%A point $x$ is non-wandering if for each neighborhood $U$ of $x$ and each positive number $N$, there is $t \in \mathbb{R}$ with $|t| > N$ such that $v_t(U) \cap U \neq \emptyset$. 
%An orbit is singular (resp. periodic, non-wandering) if it consists of singular (resp. periodic, non-wandering) points. 
For a point $x \in S$, define the $\omega$-limit set $\omega(x)$ and the $\alpha$-limit set $\alpha(x)$ of $x$ as follows: $\omega(x) := \bigcap_{n\in \mathbb{R}}\overline{\{v_t(x) \mid t > n\}} $, $\alpha(x) := \bigcap_{n\in \mathbb{R}}\overline{\{v_t(x) \mid t < n\}} $. 
%For an orbit $O$, define $\omega(O) := \omega(x)$ and $\alpha(O) := \alpha(x)$ for some point $x \in O$.
%Note that an $\omega$-limit (resp. $\alpha$-limit) set of an orbit is independent of the choice of a point in the orbit. 
%
A point $x$ of $S$ is Poisson stable (or strongly recurrent) if $x \in \omega(x) \cap \alpha(x)$. 
A point $x$ of $S$ is recurrent if $x \in \omega(x) \cup \alpha(x)$. 
Denote by $\mathrm{P}(v)$ (resp. $\mathrm{R}(v)$) the set of non-recurrent (resp. non-closed recurrent) points. 
Then $S = \mathop{\mathrm{Cl}}(v) \sqcup \mathrm{P}(v) \sqcup \mathrm{R}(v)$. 
The closure of a non-closed recurrent orbit is called a Q-set (or quasi-minimal set). 
%A quasi-minimal set is an orbit closure of a non-closed recurrent orbit. 
An orbit is singular (resp. periodic, closed, recurrent, Poisson stable) if it consists of singular (resp. periodic, closed, recurrent, Poisson stable) points. 
A periodic orbit is a limit cycle if it is the $\omega$-limit or $\alpha$-limit set of a point outside of it. 
A flow is Poisson stable (resp. recurrent) if each point is Poisson stable (resp. recurrent). 
%A flow is pointwise almost periodic if any orbit closures are minimal sets. 
%Notice that a flow is pointwise almost periodic if and only if the orbit class space is $T_1$. 

\subsubsection{Transversality and closed transversals}
Gutierrez's smoothing theorem~\cite{gutierrez1986smoothing} says that each flow on a compact surface is topologically equivalent to a $C^1$-flow. 
Therefore we can define transversality using tangential spaces of surfaces via topological equivalence. 
However, to modify transverse arcs, we immediately define transversality as follows.  
An open arc $C$ is transverse to $v$ if there is a small neighborhood $U$ of $C$ and there ist a homeomorphism $h:U \to [-1,1] \times (-1,1)$ with $h(C) = \{0\} \times (-1,1)$ such that $h^{-1}([-1,1] \times \{t \})$ for any $t \in (-1, 1)$ is an orbit arc and $h^{-1}(\{0\} \times [-1,1]) = C \cap U$. 
A simple closed curve is a closed transversal if it is a union of open arcs which are transverse to $v$. 
Notice that the concept of a closed transversal comes from the foliation theory (cf. \cite[Definition~3.4.1 p.41]{Hector1983foliation_A}). 

\subsubsection{Quasi-Q-set}
An invariant subset is a quasi-Q-set if it is an $\omega$-limit or $\alpha$-limit set of a point and intersects an essential closed transversal infinitely many times.

\subsubsection{Flow boxes, periodic annuli, and transverse annuli}
A closed trivial flow box is homeomorphic to $[0,1]^2$ each of whose orbit arcs correspond to $[0,1] \times \{t \}$ for some $t \in [0,1]$ as on the left of Figure~\ref{fig:local}. 
An open trivial flow box is homeomorphic to $(0,1)^2$ each of whose orbit arcs correspond to $(0,1) \times \{t \}$ for some $t \in (0,1)$. 
\begin{figure}
\begin{center}
\includegraphics[scale=0.35]{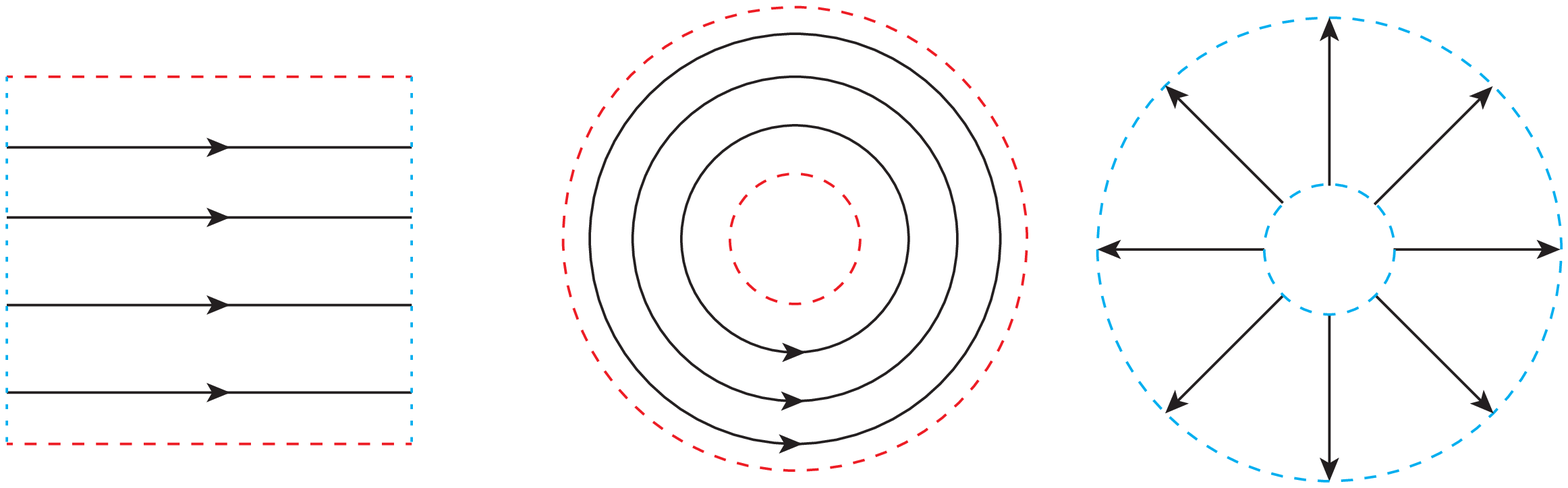}
\end{center}
\caption{Left, flow box; middle, periodic annulus; right, transverse annulus.}
\label{fig:local}
\end{figure}
A subset $U$ is an open (resp. closed) periodic annulus if there is an interval $I := (0,1)$ (resp. $I := [0,1]$) and there is a homeomorphism $h \colon U \to I \times \mathbb{S}^1$ whose images of any orbits in $U$ are maximal orbits $\{ t \} \times \mathbb{S}^1$ for some $t \in I$ as on the middle of Figure~\ref{fig:local}. 
A subset $U$ is an open (resp. closed) transverse annulus if there is an interval $I := (0,1)$ (resp. $I := [0,1]$) and there is a homeomorphism $h \colon U \to I \times \mathbb{S}^1$ whose images of any maximal orbit arcs in $U$ are maximal orbit arcs $I \times \{ \theta \}$ for some $\theta \in \mathbb{S}^1$ as on the right of Figure~\ref{fig:local}. 

%\subsubsection{Topological properties of orbits}
%
%An orbit is proper if it is embedded, locally dense if its closure has a nonempty interior, and exceptional if it is neither proper nor locally dense. 
%A point is proper (resp. locally dense, exceptional) if its orbit is proper (resp. locally dense, exceptional).
%Denote by $\mathrm{LD}(v)$ (resp. $\mathrm{E}(v)$, $\mathrm{P}(v)$) the union of locally dense orbits (resp. exceptional orbits, non-closed proper orbits). 
%Then $S = \mathop{\mathrm{Cl}}(v) \sqcup \mathrm{P}(v) \sqcup \mathrm{LD}(v) \sqcup \mathrm{E}(v)$, where $\sqcup$ denotes a disjoint union. 
%Note that an orbit on a paracompact manifold (e.g. a surface) is proper if and only if it has a neighborhood in which the orbit is closed \cite{yokoyama2019properness}.  
%This implies that a non-recurrent point is proper and so that a non-proper point is recurrent.  
%In \cite[Theorem~VI]{cherry1937topological}, Cherry showed that the closure of a non-closed recurrent orbit $O$ of a flow on a manifold contains uncountably many non-closed Poisson stable orbits whose closures are $\overline{O}$. 
%This means that each non-closed recurrent orbit of a flow on a manifold has no neighborhood in which the orbit is closed, and so is non-proper. 
%In particular, a non-closed proper orbit is non-recurrent. 
%Therefore the union $\mathrm{P}(v)$ of non-closed proper orbits is the set of non-recurrent points and that $\mathrm{R}(v) = \mathrm{LD}(v) \sqcup \mathrm{E}(v)$.
%Hence we have a decomposition $S = \mathop{\mathrm{Cl}}(v) \sqcup \mathrm{P}(v) \sqcup \mathrm{R}(v)$. 

\subsubsection{Time-reversal symmetric conditions for subsets}

%For a flow $v \colon \R \times S \to S$, the flow $-v$ defined by $-v(t,x) := v(-t,x)$ is called the time reversed flow of $v$. 
%A property $P$ for a subset $F$ of the flow $v$ is time-reversal symmetric if $F$ has the property $P$ with respect to the time reversed flow $-v$. 
%By definition, non-wandering property and Poisson stability are time-reversal symmetric. 
A point $x$ has a time-reversal symmetric limit set if $\alpha(x) = \omega(x)$. 
A subset (resp. flow) has time-reversal symmetric limit sets if any point in it (resp. in the whole space) has a time-reversal symmetric limit set. 
%A flow has time-reversal symmetric limit sets if each point has a time-reversal symmetric limit set. 
%A flow has almost time-reversal symmetric limit sets if there is an open dense subset which has the time-reversal symmetric limit set. 

\section{Characterization of flows with time-reversal symmetric limit sets}

%Let $v$ be a flow on a compact connected surface $S$. 
%\cite[Theorem~1.1]{athanassopoulos1992characterization} implies
We have the following observation. 

\begin{lemma}\label{lem:01}
The following statements are equivalent for a flow on a compact connected surface:
\\
{\rm(1)} The flow is either an irrational flow or a Denjoy flow. 
\\
{\rm(2)} There is a Poisson stable orbit and there is an invariant open \nbd of it with time-reversal symmetric limit sets. 
\\
{\rm(3)} The flow has a non-closed recurrent point and time-reversal symmetric limit sets. 
\end{lemma}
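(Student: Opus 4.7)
The plan is to prove the cyclic chain $(1) \Rightarrow (2) \Rightarrow (3) \Rightarrow (1)$, with the technical weight on $(3) \Rightarrow (1)$.

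The step $(1) \Rightarrow (2)$ is a direct verification. In an irrational flow every orbit is dense in $S$ and Poisson stable with $\alpha(x) = \omega(x) = S$. In a Denjoy flow the exceptional Cantor-type minimal set $M$ attracts every orbit in both time directions, so orbits inside $M$ are Poisson stable and every orbit satisfies $\alpha = \omega = M$. In either case the whole surface $U = S$ serves as the required invariant open TRS neighborhood of any Poisson stable orbit.

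For $(2) \Rightarrow (3)$, the non-closed Poisson stable orbit $O$ already supplies the non-closed recurrent point required by (3), so the only substantive task is to upgrade TRS from the invariant open neighborhood $U$ to the whole surface. I would carry this out by applying the Q-set analysis of $(3) \Rightarrow (1)$ to the flow on $U$: density of $O$ in its Q-set together with TRS on $U$ forces a local irrational or Denjoy picture, and the structure of the complementary dynamics on the compact connected surface $S$ then propagates the symmetry globally.

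The heart of the proof is $(3) \Rightarrow (1)$. Let $O$ be a non-closed recurrent orbit and $Q = \overline{O}$ its Q-set, and apply Maier's classification of Q-sets on compact surfaces. If $Q = S$, minimality forces $S$ to be a torus and the flow irrational. Otherwise $Q$ is nowhere dense, and the obstruction to $Q$ being a Denjoy minimal set is a separatrix orbit $s \subset Q$. I would rule this out using global TRS as follows: Maier's structure implies $\overline{s} = Q$, so $s \cup \omega(s) \cup \alpha(s) = Q$, while TRS gives $\omega(s) = \alpha(s)$; a short case split on whether $s$ meets $\omega(s)$ yields a contradiction with either the non-recurrence of $s$ or the density of the recurrent orbit $O$ in $Q$. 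Hence $Q$ is a Cantor-type minimal set, which exists only on the torus or Klein bottle, and every orbit outside $Q$ has $\alpha = \omega = Q$ by TRS together with minimality, producing a Denjoy flow.

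The main obstacle is the precise invocation of Maier's structural description of Q-sets to eliminate separatrices under TRS. The parallel difficulty for $(2) \Rightarrow (3)$, namely propagating TRS from a local neighborhood to the whole surface, is handled by the same classification of quasi-minimal sets.
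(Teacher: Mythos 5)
Your overall architecture (a cycle $(1)\Rightarrow(2)\Rightarrow(3)\Rightarrow(1)$) differs from the paper, which disposes of the lemma in a few lines: $(1)\Leftrightarrow(2)$ is exactly Theorem~1.1 of Athanassopoulos, $(1)\Rightarrow(3)$ is immediate, and $(3)\Rightarrow(2)$ follows from Cherry's theorem that the closure of a non-closed recurrent orbit contains uncountably many Poisson stable orbits dense in it (the invariant open neighborhood required in (2) is then the whole surface, since (3) assumes TRS globally). You have instead undertaken to re-prove the Athanassopoulos characterization from the Maier--Cherry structure theory of quasiminimal sets. That is legitimate in principle, but as written the re-derivation has genuine gaps.

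First, in $(3)\Rightarrow(1)$ the assertion ``if $Q=S$, minimality forces $S$ to be a torus'' begs the question: $Q=S$ does not imply minimality (stop an irrational flow at one point; the closure of a recurrent orbit is still all of $\mathbb{T}^2$). Minimality must be \emph{derived} from time-reversal symmetry, i.e.\ one must first show $Q$ contains no singular points, which is precisely the separatrix argument you sketch afterwards. Second, that argument rests on ``Maier's structure implies $\overline{s}=Q$'' for a separatrix $s\subset Q$; the structure theory only guarantees that the \emph{nontrivially recurrent} orbits of a quasiminimal set are dense in it, while the finitely many separatrices it contains need not be, so your case split does not get off the ground as stated. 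The mechanism actually needed is that a quasiminimal set containing a singular point must contain an orbit that is Poisson stable in one time direction only, directly violating $\alpha=\omega$. Third, ``a Cantor-type minimal set exists only on the torus or Klein bottle'' is false for topological flows---exceptional minimal sets occur on higher-genus surfaces---so the genus restriction must also be extracted from the TRS hypothesis on the complement of $Q$. Finally, your $(2)\Rightarrow(3)$ defers the global propagation of TRS to ``the same classification,'' but TRS is only assumed on the invariant neighborhood $U$, the quasiminimal set $\overline{O}$ need not be contained in $U$, and controlling the dynamics outside $U$ is the entire content of the theorem being re-proved; as written this step is circular. The shortest repair is the paper's: cite Athanassopoulos for $(1)\Leftrightarrow(2)$ and Cherry for $(3)\Rightarrow(2)$.
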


\begin{proof}
\cite[Theorem~1.1]{athanassopoulos1992characterization} implies that assertions {\rm(1)} and {\rm(2)} are equivalent.
Assertion {\rm(1)} implies assertion {\rm(3)}. 
Let $v$ be a flow on a compact connected surface $S$. 
Suppose that $v$ has a non-closed recurrent point $x$ and time-reversal symmetric limit sets. 
By \cite[Theorem~VI]{cherry1937topological}, the closure of the non-closed recurrent orbit $O(x)$  contains uncountably many Poisson orbits whose closures are $\overline{O}$. 
Therefore, assertion {\rm(2)} holds. 
\end{proof}

We obtain the following statement. 

\begin{lemma}\label{lem:01}
The following statements hold for a flow without non-closed recurrent points on a compact connected surface $S$: 
\\
{\rm(1)} If there are a limit cycle and its invariant open \nbd with time-reversal symmetric limit sets, then the surface is either a torus or a Klein bottle and the complement of the limit cycle is a transverse annulus. 
\\
{\rm(2)} If there is a non-recurrent point $x$ whose $\omega(x)$-limit set is a limit quasi-circuit and there is an invariant open \nbd of $\omega(x)$ with time-reversal symmetric limit sets, then the connected component of $W^u(\omega(x)) - \omega(x) = W^s(\omega(x)) - \omega(x)$ containing $x$ is an invariant open transverse annulus. 
%If there are a non-recurrent point $x$ whose $\omega(x)$-limit set is a limit quasi-circuit and the invariant open \nbd $V$ of $\omega(x)$ such that the set difference $V - \omega(x)$ has the time-reversal symmetric limit sets, then the connected component of $W^u(\omega(x)) - \omega(x) = W^s(\omega(x)) - \omega(x)$ containing $x$ is an invariant open transverse annulus. 
\end{lemma}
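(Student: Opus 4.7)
For part (1), I would start from a point $x \notin \gamma$ realizing $\gamma$ as $\omega(x)$ (without loss of generality; otherwise swap the roles of $\omega$ and $\alpha$), and upgrade to $\alpha(x) = \gamma$ using the time-reversal symmetry on the invariant neighborhood $U$. After shrinking $U$ to an invariant sub-neighborhood containing no singular points and no periodic orbit other than $\gamma$ (possible via Gutierrez's smoothing theorem together with the local theory of limit cycles for $C^1$-flows), any $y \in U - \gamma$ has a compact invariant $\omega$-limit set with no singular points and with only $\gamma$ as an available periodic orbit; the Poincar\'e--Bendixson theorem, valid here because the flow has no non-closed recurrent orbits, then forces $\omega(y) = \gamma$, and time-reversal symmetry yields $\alpha(y) = \gamma$. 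The key step is globalization: define $V := \gamma \cup \{y \in S : \omega(y) = \alpha(y) = \gamma\}$ and verify that $V$ is open (upper semicontinuity of the limit-set assignments combined with the local picture above), invariant, and closed (any $y \in \partial V$ has $\omega(y)$ meeting $\gamma$, and no-non-closed-recurrent-points plus Poincar\'e--Bendixson promote this to $\omega(y) = \gamma$; symmetrically for $\alpha$). Connectedness of $S$ then gives $V = S$, so every orbit in $S - \gamma$ is non-periodic with both limit sets equal to $\gamma$. To produce the transverse-annulus chart, I would construct a closed transversal meeting $\gamma$ transversally at a single point---which requires $\gamma$ to be two-sided, verified by a Poincar\'e return-map argument that excludes the one-sided M\"obius-tube case---and use the flow to parameterize $S - \gamma$ as $(0,1) \times \mathbb{S}^1$ with orbits matching $(0,1) \times \{\theta\}$. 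The classification of a compact surface built as an annulus with both ends identified to a common circle then forces $S$ to be either a torus or a Klein bottle depending on the orientation of the identification.

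For part (2), set $\Gamma := \omega(x)$, a limit quasi-circuit, i.e., a boundary component of some open annulus consisting of non-recurrent and singular points. Time-reversal symmetry on $U$ gives $\alpha(x) = \Gamma$, so $x \in W^s(\Gamma) \cap W^u(\Gamma)$. Let $C$ be the connected component of $W^s(\Gamma) - \Gamma$ containing $x$. For any $y \in C$, the inclusion $\omega(y) \subseteq \Gamma \subseteq U$ forces $v_t(y) \in U$ for all sufficiently large $t$, and invariance of $U$ back-propagates to $y \in U$; time-reversal symmetry on $U$ then delivers $\alpha(y) = \omega(y) \subseteq \Gamma$, so $y \in W^u(\Gamma) - \Gamma$. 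This establishes the stated equality $W^s(\Gamma) - \Gamma = W^u(\Gamma) - \Gamma$ at the level of the component $C$. To verify that $C$ is an open transverse annulus, I would combine the quasi-circuit structure of $\Gamma$---which supplies a collar open annulus abutting $\Gamma$---with the flow structure on $C$: from a transverse arc inside this collar, together with the fact that orbits in $C$ have both limits equal to $\Gamma$, the flow generates a transverse-annulus chart on a neighborhood of $\Gamma$ inside $C \cup \Gamma$; I would then extend this chart across all of $C$, using the no-non-closed-recurrent-points hypothesis to guarantee that each orbit in $C$ is a simple topological arc running from one collar end of $\Gamma$ to the other.

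The chief obstacle in both parts is precisely the globalization step---transferring the local dynamics near $\gamma$ (respectively $\Gamma$) to the entire relevant connected component $S - \gamma$ (respectively $C$)---whose validity rests crucially on the absence of non-closed recurrent orbits to preclude exotic intermediate behavior. In part (1), the two-sidedness of $\gamma$ and the orientation analysis of the gluing are secondary technical points that also need care.
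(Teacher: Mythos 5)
Your reductions at the start of each part (using time-reversal symmetry on $U$ to get $\alpha(x)=\omega(x)=\gamma$, and the back-propagation argument showing the relevant component of $W^s(\gamma)-\gamma$ lies in $W^u(\gamma)-\gamma$) match the paper and are fine. But the two steps you yourself flag as the ``chief obstacle'' are exactly where the proof has to live, and your proposal does not supply them. First, the shrinking step in part (1) is not available: an invariant open neighborhood of $\gamma$ cannot in general be shrunk to a smaller \emph{invariant} one whose closure avoids $\mathop{\mathrm{Sing}}(v)$ and all other periodic orbits, because the saturation of a small metric neighborhood of $\gamma$ need not be small --- orbits of nearby points may travel far before returning. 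Moreover, classical Poincar\'e--Bendixson is not valid on a compact surface of higher genus even in the absence of non-closed recurrent points: limit sets can be quasi-circuits containing singular points. (The paper's substitute is Theorem~A of Yokoyama's Poincar\'e--Bendixson paper, invoked to control $\alpha(y)=\omega(y)\subseteq\mathop{\mathrm{Sing}}(v)$ for non-recurrent points $y$ of a non-periodic $\gamma$.) Second, your closedness argument for $V=\gamma\cup\{y:\omega(y)=\alpha(y)=\gamma\}$ rests on the unproved assertion that $y\in\partial V$ has $\omega(y)$ meeting $\gamma$; upper semicontinuity of $y\mapsto\omega(y)$ gives no such thing, and without it the clopen argument collapses.

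The paper's actual mechanism, which your sketch omits, is constructive. Since $\alpha(x)=\omega(x)=\gamma$, the orbit $O(x)$ spirals into $\gamma$ forward in time from one side of a transverse arc $I$ at a non-recurrent point $y\in\gamma$ and backward in time from the other side. From consecutive intersection points one forms loops $\mu_{\pm i}$ (orbit arc plus transverse subarc), pushes them to genuine closed transversals $\gamma_{\pm i}$, and obtains a positively invariant open transverse annulus $A_+$ and a negatively invariant one $A_-$ abutting $\gamma$ from the two sides. Time-reversal symmetry on $A_-\sqcup\gamma\sqcup A_+$ forces every backward orbit leaving $A_+$ through $\gamma_1$ to enter $A_-$, and a finite cover by closed flow boxes along these connecting orbit arcs yields a compact transverse annulus $A_0$ with boundary $\gamma_1\sqcup\gamma_{-1}$; the union $A=A_-\sqcup A_0\sqcup A_+$ is then the desired invariant open transverse annulus, and it \emph{is} the component of $W^s(\gamma)-\gamma=W^u(\gamma)-\gamma$ containing $x$. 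The globalization in part (1) is then immediate without any limit-set continuity: $A\sqcup\gamma$ is a neighborhood of $\gamma$ with $\partial A=\gamma$, hence clopen, hence equal to $S$ by connectedness, which identifies $S$ as an annulus with its two boundary circles glued to $\gamma$, i.e.\ a torus or Klein bottle. In part (2) your description of the transverse-annulus chart (``the flow generates a transverse-annulus chart \dots I would then extend this chart across all of $C$'') restates the conclusion rather than proving it; the same $A_\pm$/$A_0$ construction is what actually produces the chart there as well. As written, the proposal is an outline with the essential construction missing.
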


\begin{proof}
Let $\gamma$ be a limit cycle or  limit quasi-circuit, $U$ be an invariant open \nbd of $\gamma$, and $x$ be a point with $\gamma = \omega(x)$. 
Then $x \in U$ and $\gamma = \alpha(x) = \omega(x)$. 
Fix any non-recurrent point $y$ in $\gamma$ and a small open transverse arc $I$ intersecting $\{ y \} = I \cap \gamma$. 
\cite[Theorem~A]{yokoyama2021poincare} implies that if $\gamma$ is not periodic then $\alpha(y) = \omega(y) \subseteq \Sv$.  
Let $I_-$ and $I_+$ be the connected components of $I - \{ y \}$ with $O^+(x) \cap I \subset I_+$ and $O^-(x) \cap I \subset I_-$. 
Taking $I$ small if necessary, we may assume that $O^+(x) \cap I_+$ converges to $y$ monotonically in $I$ and that $O^-(x) \cap I_-$ converges to $y$ monotonically in $I$.  
For any $i \in \Z_{>0}$, denote by $x_i$ the $i$-th intersection $O^+(x) \cap I_+$ for $v$ and by $x_{-i}$ the $i$-th intersection $O^-(x) \cap I_-$ for $-v$. 
Let $f_v: I \to I$ be the first return map on $I$ induced by $v$, $C_{a,b} \subset O(x)$ the orbit arc from $a$ to $b$, and $I_{a,b} \subset I$ the subinterval between $a$ and $b$ of $I$.  
Then $x_{i+1} = f_v(x_i)$ and $x_{-i-1} = f_v(x_{-i})$ for any $i \in \Z_{>0}$. 
For any $i \in \Z_{>0}$, put $C_i := C_{x_i, x_{i+1}}$, $J_i := I_{x_i, x_{i+1}}$, $C_{-i} := C_{x_{-i}, x_{-i-1}}$, $J_{-i} := I_{x_{-i}, x_{-i-1}}$. 
For any $i \in \Z_{>0}$, define loops $\mu_i := C_i \cup J_i$ and $\mu_{-i} := C_{-i} \cup J_{-i}$. 
Then $\mu_i \cap \mu_j = \emptyset$ for any $i, j \in \Z - \{ 0 \}$ with $|i - j| > 1$. 
Moreover, we have $\mu_i \cap \mu_{i+1} = \{ x_{i+1} \}$ and $\mu_{-i} \cap \mu_{-(i+1)} = \{ x_{-(i+1)} \}$ for any $i \in \Z_{>0}$. 
%Then the unions $\mu_i := C_i \cup J_i$ and $\mu_{-i} := C_{-i} \cup J_{-i}$ for any $i \in \Z_{>0}$ are pairwise disjoint loops. 
By compactness of $S$, by taking a subarc of $I$, we may assume that the connected component of $S - (\mu_i \sqcup \mu_{i+4})$ (resp. $S - (\mu_{-i} \sqcup \mu_{-i-4})$) containing $\mu_{i+2}$ (resp. $\mu_{-i-2}$) for any $i \in \Z_{>0}$ is an open annulus. 
Applying \cite[Lemma~3.1]{yokoyama2021poincare} to the loops $\mu_i := C_i \cup J_i$ and $\mu_{-i} := C_{-i} \cup J_{-i}$ for any $i \in \Z_{>0}$, there are closed transversals $\gamma_i$ and $\gamma_{-i}$ near $\mu_i$ and $\mu_{-i}$ respectively. 
By construction, the connected component of $S - (\gamma_i \sqcup \gamma_{i+4})$ (resp. $S - (\gamma_{-i} \sqcup \gamma_{-i-4})$) containing $\gamma_{i+2}$ (resp. $\gamma_{-i-2}$) for any $i \in \Z_{>0}$ is an open transverse annulus $A_i$ (resp. $A_{-i}$). 
Then the union $A_+ := \bigsqcup_{i =0}^\infty A_{1+4i} \sqcup \gamma_{1+4i}$ is a positive invariant open transverse annulus and the union $A_- := \bigsqcup_{i =1}^\infty A_{-(1+4i)} \sqcup \gamma_{-(1+4i)}$ is a negative invariant open transverse annulus. 
By construction, we have that $\gamma = \omega(x) = \omega(z_+)$ for any $z_+ \in A_+$ and $\gamma = \alpha(x) = \alpha(z_-)$ for any $z_- \in A_-$. 
Moreover, the union $A_- \sqcup \gamma \sqcup A_+$ is a \nbd of $\gamma \setminus \Sv = \gamma \cap (\Pv \sqcup \mathrm{P}(v))$ and is contained in any invariant open \nbd of $\gamma$. 
Then $A_- \sqcup \gamma \sqcup A_+$ has time-reversal symmetric limit sets. 
Since the $\omega$-limit and $\alpha$-limit sets of every point in $A_- \sqcup \gamma \sqcup A_+$ are time-reversal symmetric, the intersection $O^-(z_+) \cap A_-$ for any $z_+ \in \gamma_1$ is negative invariant and the intersection $O^+(z_-) \cap A_+$ for any $z_- \in \gamma_{-1}$ is positive invariant. 
For any $z_+ \in \gamma_1$, denote by $C_{z_+}$ the closed orbit arc from $z_+$ to the intersection $\gamma_{-1} \cap O^-(z_+)$. 
By the flow box theorem for a continuous flow on a compact surface (cf. Theorem 1.1, p.45\cite{aranson1996introduction}), for any $z_+ \in \gamma_1$, there is a closed flow box $B_{z_+}$ containing $C_{z_+}$ such that $\bigsqcup_{z_+' \in \gamma_1 \cap B_{z_+}} C_{z_+'} = B_{z_+}$.  
From the compactness of $\gamma_1$, there are finitely many closed flow boxes $B_{z_{+,i}}$ whose union is a cover of $\gamma_1$. 
Then the union $A_0 := \bigcup_i B_{z_{+,i}}$ is a closed transverse annulus whose boundary is $\gamma_1 \sqcup \gamma_{-1}$. 
Therefore the disjoint union $A := A_- \sqcup A_0 \sqcup A_+$ is an invariant open annulus with $x \in A$. 
Since the union $A \sqcup \gamma$ is a \nbd of $\gamma \setminus \Sv = \gamma \cap (\Pv \sqcup \mathrm{P}(v))$, 
%the disjoint union $A \sqcup (\gamma \cap (\Pv \sqcup \mathrm{P}(v))) = (A \sqcup \gamma) \setminus \Sv$ is an invariant subset. 
%This implies that 
the connected component of $W^u(\gamma) - \gamma = W^s(\gamma) - \gamma$ containing $x$ is the invariant open transverse annulus $A$. 

Suppose that $\gamma$ is a limit cycle. 
Then the invariant open subset $A \sqcup \gamma$ is a \nbd of $\gamma$ with $\gamma = \partial A$. 
Therefore $A \sqcup \gamma$ is closed and open. 
Since $S$ is connected, we have that $S = U = A \sqcup \gamma$ is the resulting surface from a closed annulus gluing the boundary and is either a torus or a Klein bottle. 
\end{proof}

Notice that, for any positive integer $k$, there exists a flow with time-reversal symmetric limit sets, but without non-closed recurrent points on a compact connected surface $S$, for which there is a limit quasi-circuit $\gamma$ such that the complement $S - \gamma$ consists of $k$ connected components. 
%there exists a flow with time-reversal symmetric limit sets, but without non-closed recurrent points on a compact connected surface, for which there is an $\omega$-limit set $\gamma$ of a point such that the complement $W^u(\gamma) - \gamma$ consists of $k$ connected components. 
%
Indeed, by a Wada-Lakes-like construction (see Example~\ref{ex:wada04} for details), we can construct such a flow with a limit quasi-circuit whose complement consists of invariant subsets in $\Cv$ and invariant open transverse annuli. 
The previous lemma implies the following statement. 

\begin{corollary}\label{cor:01}
The following statements are equivalent for a flow on a compact connected surface:
\\
{\rm(1)} The surface is either a torus or a Klein bottle, and there is a limit cycle whose complement is a transverse annulus. 
\\
{\rm(2)} There is a limit cycle, and there is an invariant open \nbd of it with time-reversal symmetric limit sets. 
\\
{\rm(3)} The flow has a limit cycle and time-reversal symmetric limit sets. 
\end{corollary}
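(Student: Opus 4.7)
The plan is to prove the cycle $(3) \Rightarrow (2) \Rightarrow (1) \Rightarrow (3)$, which will yield all three equivalences. The implication $(3) \Rightarrow (2)$ is immediate: if the flow on the whole space has time-reversal symmetric limit sets and admits a limit cycle $\gamma$, then the ambient compact connected surface $S$ itself serves as an invariant open \nbd of $\gamma$ with time-reversal symmetric limit sets. The implication $(2) \Rightarrow (1)$ is precisely Lemma~\ref{lem:01}(1), applied to the given limit cycle and its invariant open \nbd.

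The only direction requiring additional work is $(1) \Rightarrow (3)$. Assume $S$ is a torus or a Klein bottle, $\gamma$ is a limit cycle, and the complement $A := S - \gamma$ is a transverse annulus. Fix a homeomorphism $h \colon A \to (0,1) \times \mathbb{S}^1$ sending each orbit in $A$ to a maximal orbit arc of the form $(0,1) \times \{\theta\}$. For any $x \in A$, the forward and backward orbits of $x$ must eventually leave every compact subset of $A$, so all their accumulation points lie in $\overline{A} - A = \gamma$. Therefore $\omega(x) \cup \alpha(x) \subseteq \gamma$. Since $\gamma$ is a single periodic orbit, it contains no singular points and has no proper non-empty closed invariant subsets; consequently both $\omega(x)$ and $\alpha(x)$, being non-empty closed invariant subsets of $\gamma$, must equal $\gamma$ itself. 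For $x \in \gamma$ the equality $\omega(x) = \alpha(x) = \gamma$ is trivial. Hence the flow has time-reversal symmetric limit sets globally, and together with the existence of the limit cycle $\gamma$ this yields (3).

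I do not anticipate a serious obstacle: the heavy lifting, namely recovering the transverse annulus structure from time-reversal symmetric behavior on an invariant \nbd of a limit cycle and concluding that $S$ is a torus or Klein bottle, has already been done in Lemma~\ref{lem:01}. The only subtle point in $(1) \Rightarrow (3)$ is verifying that in a transverse annulus whose topological frontier in the ambient closed surface is exactly one periodic orbit, both $\omega$- and $\alpha$-limit sets of any orbit are the entire cycle $\gamma$, and this follows from $\gamma$ being a single compact orbit with no proper non-empty closed invariant subsets.
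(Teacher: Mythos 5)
Your proof is correct and follows essentially the same route as the paper: the paper likewise derives the equivalence of (1) and (2) from Lemma~\ref{lem:01}, and treats (1)$\Rightarrow$(3) and (3)$\Rightarrow$(2) as immediate. Your explicit verification of (1)$\Rightarrow$(3) --- that in a transverse annulus complementary to $\gamma$ every limit set is a non-empty closed invariant subset of the single periodic orbit $\gamma$, hence equals $\gamma$ --- is exactly the intended (and in the paper unwritten) argument.
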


\begin{proof}
Lemma~\ref{lem:01} implies that assertions {\rm(1)} and {\rm(2)} are equivalent. 
Assertion {\rm(1)} implies assertion {\rm(3)}. 
Assertion {\rm(3)} implies assertion {\rm(2)}. 
\end{proof}

For a closed subset of a compact surface, we have the following relation between its connected components and the ends of its complement. 
%Roughly speaking, the compact metrizable space $T'$ can be obtained from $T_{\mathrm{end}}$ by identifying ends along with connected components of $\Sv \cap Q$. 
%More precisely, we have the following statement. 
%To demonstrate this, we have the following claims. 

\begin{lemma}\label{lem:end_collapse}
Let $S$ be a connected compact surface and $T$ a connected component of the complement $S -A$ of a closed subset $A$ of $S$. 
% containing a point $x \in S - A$. 
Denote by $T'$ the resulting space $T'$ from $T \sqcup A$ by collapsing any connected component of $A$ into singletons. 
Then there is a continuous extension $p \colon T_{\mathrm{end}} \to T'$ of the inclusion $T \subset T'$ such that any inverse images of singletons on $T'$ by $p$ are finite.  
\end{lemma}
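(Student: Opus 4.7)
The plan is to define $p$ on ends using the $S$-closures of end-defining neighborhoods, verify continuity against the quotient and end topologies, and then establish finite fibers via a surface-topology argument — the last being the main obstacle.

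Given an end $e$ of $T$ represented by a nested sequence $(U_n)_{n \geq 1}$ of connected components of $T - K_{\lambda_n}$, I set $L(e) := \bigcap_n \overline{U_n}^{S}$, with closures in $S$. Each $\overline{U_n}^{S}$ is compact and connected (closure of a connected set in a Hausdorff space) and lies in $\overline{T}^{S} = T \cup \partial T \subseteq T \cup A$, so the nested intersection $L(e)$ is nonempty, compact, and connected. Because $(K_{\lambda_n})$ exhausts $T$, every $x \in T$ is eventually excluded from $\overline{U_m}^{S}$, so $L(e) \subseteq A$, and $L(e)$ lies in a unique connected component $K(e)$ of $A$. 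I set $p(e) := \pi(K(e))$ where $\pi \colon T \cup A \to T'$ is the quotient map, and $p(x) := x$ for $x \in T$. Well-definedness (independence of the defining sequence) is clear from cofinality.

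For continuity at an end $e$, take an open neighborhood $W$ of $p(e) = \pi(K(e))$ in $T'$. Then $\pi^{-1}(W)$ is open and $\sim$-saturated in $T \cup A$ and contains $K(e) \supseteq L(e)$. Extending $\pi^{-1}(W)$ to an open set $V \subseteq S$ with $V \cap (T \cup A) = \pi^{-1}(W)$, the standard compactness argument for the decreasing family $(\overline{U_n}^{S})$ with intersection $L(e) \subseteq V$ gives an index $N$ with $\overline{U_N}^{S} \subseteq V$, hence $\overline{U_N}^{S} \subseteq \pi^{-1}(W)$. The basic neighborhood $\hat{U}_N$ of $e$ then maps into $W$: for $x \in U_N \subseteq T$, $p(x) = x \in \pi^{-1}(W) \cap T \subseteq W$, and for any end $e'$ with $U_N$ in its defining sequence, $L(e') \subseteq \overline{U_N}^{S} \subseteq \pi^{-1}(W)$, so saturation forces $K(e') \subseteq \pi^{-1}(W)$ and hence $p(e') \in W$. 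Continuity at points of $T$ is immediate.

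The principal difficulty is the finite-fiber claim. Fibers over $x \in T$ are trivially singletons. For a fiber over $\pi(K)$ with $K$ a connected component of $A$, one must show that only finitely many ends satisfy $K(e) = K$. The approach is to exploit that $T_{\mathrm{end}}$ is a compact surface (by Richards' classification). Assuming for contradiction infinitely many such ends $e_1, e_2, \ldots$, choose pairwise disjoint closed disk neighborhoods $D_{e_i}$ in $T_{\mathrm{end}}$; each $\partial D_{e_i} \subseteq T$ is a simple closed curve disjoint from $K$, and by connectedness of $K$ bounds in $S$ a region $R_{e_i}$ containing $K$. The pairwise disjointness of the $D_{e_i}$ in $T_{\mathrm{end}}$ forces the regions $R_{e_i}$ into a linearly ordered nested configuration in $S$, and then an accumulation end $e_\infty$ of $(e_i)$ in the compact surface $T_{\mathrm{end}}$ provides the contradiction: in a locally Euclidean chart at $e_\infty$, the infinite family of pairwise disjoint disks is incompatible with the forced nesting of their image regions around the single fixed connected set $K$ in $S$. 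Turning this sketch into a rigorous topological argument is the crux of the proof.
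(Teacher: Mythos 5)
Your construction of $p$ and the continuity check are sound, and in fact more direct than the paper's: you define $p(e)$ via the nested intersection $L(e)=\bigcap_n \overline{U_n}^{S}\subseteq A$ of compact connected sets, whereas the paper routes through the metric completion $T_{\mathrm{cpl}}$ and two auxiliary maps $p_{\mathrm{cpl}}$, $q_{\mathrm{cpl}}$ before descending to $T'$. Up to that point the proposal is fine.

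The finite-fiber claim, however, is the substantive content of the lemma, and what you offer for it is not a proof. Two of its steps are unjustified as stated: a simple closed curve $\partial D_{e_i}\subseteq T$ need not separate $S$ at all (so ``bounds in $S$ a region $R_{e_i}$ containing $K$'' fails for non-separating curves), and even for separating curves the asserted ``linearly ordered nested configuration'' of the regions does not follow merely from disjointness of the disks, nor do you use it for anything concrete. More importantly, the proposed endgame --- an accumulation end $e_\infty$ and a ``locally Euclidean chart'' --- cannot work, because there is nothing locally contradictory about infinitely many pairwise disjoint closed disks accumulating at a point of a surface; the obstruction is global, not local. The paper's argument is: if infinitely many ends $e_i$ all lie over the same component $C$ of $A$, one obtains infinitely many pairwise disjoint loops $\gamma_i=\partial D_i\subseteq T$ such that each inside piece $T\cap D_i$ accumulates on $C$ and the complementary piece also accumulates on $C$; hence $S-\bigcup_i\gamma_i$ is \emph{connected}, and a compact surface cannot contain infinitely many pairwise disjoint loops with connected complement (finiteness of the genus and of the number of boundary components). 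You have not identified this finiteness-of-genus mechanism, and you concede the point yourself (``turning this sketch into a rigorous topological argument is the crux of the proof''), so the proof is incomplete exactly where the lemma has content.
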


\begin{figure}
\[
\xymatrix@=18pt{
& & T_{\mathrm{cpl}} \ar[lld]_{p_{\mathrm{cpl}}} \ar[rrd]^{q_{\mathrm{cpl}}} & & \\
T \sqcup (\Sv \cap Q) \ar@{}[d]|{\bigcup} \ar[rr]^{\hspace{30pt}p_0} & & T'  \ar@{}[d]|{\bigcup}  & &  T_{\mathrm{end}} \ar[ll]_p \ar@{}[d]|{\bigcup} \\
T & &  T \ar@{=}[ll] \ar@{=}[rr]  & &  T}
\]
\caption{A commutative diagram induced by the end completion, the metric completion and the collapse.}
\label{Fig:quotients}
\end{figure}

\begin{proof}
For a connected component $C$ of $A$ intersecting $\partial T$, define a subset $\mathcal{E}_C(T)$ of $\mathcal{E}(T)$ by $e \in \mathcal{E}_C(T)$ if $e \in \overline{U_C - C}^{T_{\mathrm{end}}}$ for any \nbd $U_C \subseteq T$ of $C$, where $\overline{B}^{T_{\mathrm{end}}}$ is the closure of $B$ with respect to $T_{\mathrm{end}}$.

We claim that $\mathcal{E}_C(T)$ is finite for any connected component $C$ of $A$ intersecting $\partial T$. 
Indeed, assume that $\mathcal{E}_C(T)$ is infinite for some connected component $C$ of $A$ intersecting $\partial T$. 
Let $e_i$ be infinitely many ends of $\mathcal{E}_C(T)$. 
Then there are infinitely many loops $\gamma_i \subset  T$ and there are infinitely many open disks $D_i \subset  T_{\mathrm{end}}$ with $e_i \in D_i$, $\partial D_i = \gamma_i$, $\mathcal{E}_C(T) \not\subseteq \bigcup_{i} \overline{D_i}^{T_{\mathrm{end}}}$, and $\overline{D_i}^{T_{\mathrm{end}}}\cap \overline{D_j}^{T_{\mathrm{end}}} = \emptyset$ for any $i \neq j$. 
%, where $\overline{B}^{T_{\mathrm{end}}}$ is the closure of $B$ with respect to $T_{\mathrm{end}}$. 
%
Since $T_{\mathrm{end}}$ is a connected compact surface and $D_i \subset T_{\mathrm{end}}$ is an open disk for any $i$, the set difference $T_{\mathrm{end}} - \bigcup_{i} \overline{D_i}^{T_{\mathrm{end}}}$ is a connected surface. 
By the openness and connectivity of the surface $T_{\mathrm{end}} - \bigcup_{i} \overline{D_i}^{T_{\mathrm{end}}}$, \cite[Theorem~1.5(f)]{raymond1960end} implies that $T \setminus \bigcup_{i} \overline{D_i}^{T_{\mathrm{end}}}$ is a connected subset of $S$. 
By $\mathcal{E}_C(T) \not\subseteq \bigcup_{i} \overline{D_i}^{T_{\mathrm{end}}}$, the closure $\overline{T \setminus \bigcup_{i} \overline{D_i}^{T_{\mathrm{end}}}} \subseteq C \sqcup (T \setminus \bigcup_{i} \overline{D_i}^{T_{\mathrm{end}}})$ intersects $C$. 
The connectivity of $C$ implies that the union $(T \setminus \bigcup_{i} \overline{D_i}^{T_{\mathrm{end}}}) \sqcup C$ is connected. 
On the other hand, since $\overline{T \cap D_i} \cap C \neq \emptyset$ for any $i$, the union $C \sqcup \bigcup_{i} (T \cap D_i) \subset S$ is connected. 
Then the union $((T \setminus \bigcup_{i} \overline{D_i}^{T_{\mathrm{end}}}) \sqcup C) \cup (C \sqcup \bigcup_{i} (T \cap D_i)) = (T - \bigcup_{i} \gamma_i) \sqcup C$ is connected. 
Since $T$ is a connected component of $S - C$, the connectivity of $S$ implies that $S - T$ is connected.  
Therefore the union $((T - \bigcup_{i} \gamma_i) \sqcup C) \cup (S - T) = S - \bigcup_{i} \gamma_i$ is connected, which contradicts the compactness of $S$.  

Denote by $T_{\mathrm{cpl}}$ the metric completion of $T$ and by $p_{\mathrm{cpl}} \colon T_{\mathrm{cpl}} \to \overline{T} \subseteq T \sqcup A$ the canonical quotient map as in \S~\ref{sec:mc}. 
We claim that an extension $q_{\mathrm{cpl}} \colon T_{\mathrm{cpl}} \to T_{\mathrm{end}}$ of the inclusion $T \subset T'$ is well-defined and continuous.  
Indeed, fix a point $\widetilde{x} \in T_{\mathrm{cpl}} - T$. 
By definition of metric completion, the point $\widetilde{x}$ is a Cauchy sequence $(x_n)_{n \in \Z_{\geq 0}}$ in $T$ and so a sequence converging to a point $x_\infty \in A \subseteq S - T$. 
Since $T_{\mathrm{cpl}}$ is a complete metric space defined by a Riemannian metric on $S$, the Cauchy sequence $(x_n)_{n \in \Z_{\geq 0}}$ in $T$ can be realized as points of a curve $C_{\widetilde{x}}$ in $T$ with bounded length. 
Denote by $C$ the connected component of $A$ containing $x_\infty$. 
Then there are finitely many loops $\gamma_1, \gamma_2, \ldots , \gamma_k \subset T$ which bound pairwise disjoint closed disks $D_1, D_2, \ldots , D_k$ in $T_{\mathrm{end}}$ with $\mathcal{E}_C(T) \subset \bigsqcup_{i=1}^k D_i$. 
Then there is a small positive number $\varepsilon > 0$ with $B_\varepsilon (\gamma_i) \subset T$ such that $B_\varepsilon (\gamma_i) \cap B_\varepsilon (\gamma_j) = \emptyset$ for any $i \neq j$, where $B_\varepsilon (\gamma)$ is the $\varepsilon$ \nbd of a subset $\gamma$ in $S$. 
Therefore $C_{\widetilde{x}}$ can intersect $\bigsqcup_i \gamma_i$ at most finitely many times. 
Then there are natural numbers $N$ and $l$ such that $\bigcup_{n > N} x_n \subset D_l \cap T$.  
This completes the claim. 
%means that an extension $q_{\mathrm{cpl}} \colon T_{\mathrm{cpl}} \to T_{\mathrm{end}}$ of the inclusion $T \subset T'$ is well-defined and continuous.  

We claim that, for any distinct connected components $A_1$ and $A_2$ of $A$, the image $q_{\mathrm{cpl}}(p_{\mathrm{cpl}}^{-1}(A_1)) \cap q_{\mathrm{cpl}}(p_{\mathrm{cpl}}^{-1}(A_2)) = \emptyset$. 
Indeed, fix distinct connected components $A_1$ and $A_2$ of $A$. 
By Hausdorff-Alexandroff theorem, since $A$ is compact metrizable, there is a continuous surjection $f \colon \mathcal{C} \to A$ from a Cantor set $\mathcal{C}$. 
Then the inverse images $\mathcal{C}_1 := f^{-1}(A_1)$ and $\mathcal{C}_2 := f^{-1}(A_2)$ are disjoint compact subsets of $\mathcal{C}$. 
Since the Cantor set $\mathcal{C}$ has a basis of closed and open subsets, the compactness of $\mathcal{C}_1$ implies that there is closed and open \nbd $V_1$ in $\mathcal{C}$ of $\mathcal{C}_1$ with $V_1 \cap \mathcal{C}_2 = \emptyset$. 
Then the complement $\mathcal{C} - V_1$ is a closed and open \nbd of $\mathcal{C}_2$. 
The images $F_1 := f(V_1)$ and $F_2 := f(\mathcal{C} - V_1)$ of compact subsets are disjoint compact subsets of $S$ with $A_1 \subseteq F_1$ and $A_2 \subseteq F_2$. 
The normality of $S$ implies that there are disjoint open \nbds $U_1$ and $U_2$ of closed subsets $F_1$ and $F_2$ respectively. 
Since $T$ is a connected component of $S - A$, by $A \subset U_1 \sqcup U_2$, the set difference $K := T \setminus (U_1 \sqcup U_2) = (T \sqcup A) \setminus (U_1 \sqcup U_2)$ is closed and so compact in $S$. 
This means that any end corresponding to a sequence of open subsets in $q_{\mathrm{cpl}}(p_{\mathrm{cpl}}^{-1}(A_1))$ is different from any end corresponding to a sequence of open subsets in  $q_{\mathrm{cpl}}(p_{\mathrm{cpl}}^{-1}(A_2))$.

The previous claim means that the image $p_{\mathrm{cpl}}(q_{\mathrm{cpl}}^{-1}(e))$ for any end $e$ is contained in a connected component of $A$. 
This implies that the image $p_0 \circ p_{\mathrm{cpl}}(q_{\mathrm{cpl}}^{-1}(e))$ is a point in $T'$, where $p_0 \colon T \sqcup A \to T'$ is the quotient map. 
Therefore a continuous extension $p \colon T_{\mathrm{end}} \to T'$ of the inclusion $T \subset T'$ is defined by $p(e) := p_0 \circ p_{\mathrm{cpl}}(q_{\mathrm{cpl}}^{-1}(e)) \in T'$ for any end $e$ as Figure~\ref{Fig:quotients}. 
Moreover, since $\mathcal{E}_C(T)$ is finite for any connected component $C$ of $A$ intersecting $\partial T$, any inverse images of singletons on $T'$ by $p$ are finite. 
\end{proof}

The non-existence of quasi-Q-sets consisting of singular points and non-recurrent points follows from the time-reversal symmetric condition for limit sets. 

\begin{lemma}\label{lem:03}
A flow with time-reversal symmetric limit sets on a compact connected surface has no quasi-Q-sets that consist of singular points and non-recurrent points. 
\end{lemma}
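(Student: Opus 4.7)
The plan is to argue by contradiction. Suppose $Q$ is a quasi-Q-set with $Q \subseteq \Sv \sqcup \mathrm{P}(v)$, and without loss of generality write $Q = \omega(x)$ for some point $x$; the time-reversal symmetric hypothesis then gives $\alpha(x) = Q$ as well. A short case analysis rules out $x \in \Sv$ (else $Q = \{x\}$, contradicting the infinitude below), $x \in \Pv$ (else $\omega(x) = O(x)$ is a closed orbit meeting the transversal $\gamma$ only finitely often), and $x \in \mathrm{R}(v)$ (else $x \in \omega(x) = Q \subseteq \Sv \sqcup \mathrm{P}(v)$, a contradiction), forcing $x$ to be non-recurrent with $O(x) \cap Q = \emptyset$. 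Let $\gamma$ be an essential closed transversal witnessing $|Q \cap \gamma| = \infty$; since $\gamma$ contains no singular points, $Q \cap \gamma \subseteq \mathrm{P}(v)$.

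Using $\omega(x) = \alpha(x) = Q$ and transversality, I enumerate $O(x) \cap \gamma$ as a bi-infinite sequence $(x_n)_{n \in \Z}$ with $x_{n+1} = f_\gamma(x_n)$, where $f_\gamma$ is the first-return map; the cluster sets of $(x_n)$ as $n \to +\infty$ and as $n \to -\infty$ both equal $Q \cap \gamma$. The core of the plan is then to extract a flow-recurrent point inside $Q \cap \gamma$, which contradicts $Q \cap \gamma \subseteq \mathrm{P}(v)$. The key reduction is that the return time to $\gamma$ is bounded below by a positive constant (by compactness of $\gamma$ and continuity of $v$), so any $f_\gamma$-recurrent point $z$ automatically satisfies $v_{t_k}(z) \to z$ for some $t_k \to \infty$, hence $z \in \omega(z)$. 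Thus it suffices to produce an $f_\gamma$-recurrent point in $Q \cap \gamma$ via Birkhoff's recurrence theorem applied to a suitable non-empty compact $f_\gamma$-invariant subsystem.

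The main obstacle is that $f_\gamma$ may fail to be defined on portions of $Q \cap \gamma$: a non-recurrent point $y \in Q \cap \gamma$ with $\omega(y) = \alpha(y) \subseteq \Sv$ is a connecting quasi-separatrix whose orbit may leave $\gamma$ forever. To isolate an everywhere-$f_\gamma$-defined subset, I plan to fix an accumulation point $y_0 \in Q \cap \gamma$, extract subsequences $x_{p_k} \to y_0$ with $p_k \to +\infty$ and $x_{q_k} \to y_0$ with $q_k \to -\infty$, and build the loop $\mu_k$ from the long orbit arc of $O(x)$ joining $x_{q_k}$ to $x_{p_k}$ together with the short subarc of $\gamma$ connecting them near $y_0$. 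Following the loop-smoothing technique in the proof of Lemma~\ref{lem:01}, I perturb each $\mu_k$ into a nearby closed transversal $\gamma_k$; these closed transversals can be used to excise the finitely many connecting-separatrix orbits of $Q$ crossing each $\gamma_k$ and to trap an $f_\gamma$-invariant remainder of $\overline{\{x_n\}_{n\in\Z}} \cap (Q \cap \gamma)$ on which the return map acts as a compact homeomorphism. I expect the delicate step to be verifying that this excision procedure leaves a non-empty compact $f_\gamma$-invariant remainder, for which Birkhoff's recurrence theorem then delivers the flow-recurrent point in $Q$ that contradicts $Q \subseteq \Sv \sqcup \mathrm{P}(v)$.
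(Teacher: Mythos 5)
There is a genuine gap, and it sits exactly at the step you flag as ``delicate'': the non-empty compact $f_\gamma$-invariant subsystem you want to feed to Birkhoff's recurrence theorem cannot exist. Since $Q$ consists of singular points and non-recurrent points, \cite[Theorem~A]{yokoyama2021poincare} (as invoked in the paper) forces every non-singular orbit $O \subseteq Q$ to have $\alpha(O)\cup\omega(O) \subseteq \Sv \cap Q$; each such orbit is a connecting quasi-separatrix whose closure meets the closed transversal $\gamma$ only finitely many times. Hence \emph{every} point of $Q\cap\gamma$ has only finitely many forward returns to $\gamma$, so no point of $Q\cap\gamma$ lies in the domain of all iterates of $f_\gamma$, and the only $f_\gamma$-invariant subset of $Q\cap\gamma$ (or of the cluster set of $(x_n)$, which is contained in $Q\cap\gamma$) is empty. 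Your proposed excision does not remove ``finitely many'' exceptional orbits: the infinitude of $Q\cap\gamma$ together with the finiteness of each orbit's trace on $\gamma$ means there are infinitely many such separatrix orbits, and excising them removes all of $Q\cap\gamma$. The strategy is in effect circular --- the hypothesis that $Q\setminus\Sv$ is non-recurrent is precisely the obstruction to producing a return-map-recurrent point in $Q\cap\gamma$.

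Your preliminary reductions (ruling out $x\in\Sv\sqcup\Pv\sqcup\mathrm{R}(v)$, noting $Q\cap\gamma$ is infinite, and that each orbit of $Q$ crosses $\gamma$ finitely often) are correct and do overlap with the paper's opening moves, but the paper's contradiction comes from a different mechanism entirely. It passes to the end compactification $T_{\mathrm{end}}$ of the component $T$ of $S-(\Sv\cap Q)$ containing $x$, where the connected components of $\Sv\cap Q$ become finitely many ends (Lemma~\ref{lem:end_collapse}); each separatrix orbit of $Q$ together with its limiting end(s) becomes a loop $\nu_i$ meeting the transversal, hence essential or bounding a disk containing identified ends. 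One then gets infinitely many such loops sharing at most two singular points, and the finiteness of genus, boundary components, and identified ends yields nested separating configurations that would force $x$ to lie in two disjoint invariant regions simultaneously --- the contradiction. If you want to salvage a return-map argument you would have to work on a section where the dynamics of $Q$ is actually invertible, which is exactly what the presence of singular endpoints prevents; some version of the surface-topology counting argument appears unavoidable here.
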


\begin{proof}
Let $v$ be a flow with time-reversal symmetric limit sets on a compact connected surface $S$. 
%By compactness, the complement of any disjoint union of infinitely many loops are disconnected. 
%
Assume that there is a quasi-Q-set $Q$ that consists of singular points and non-recurrent points. 
Fix a point $x$ whose $\omega$-limit and $\alpha$-limit sets are $Q$. 
%Since $Q$ consists of singular points and non-recurrent points, the point $x$ is non-recurrent and so $Q = \omega(x) = \alpha(x) = \overline{O(x)} - O(x)$. 
Since any connected component of a closed subset is closed, the intersection $\Sv \cap Q$ and its connected components are closed and so compact. 
Denote by $T$ the connected component of the complement $S - (\Sv \cap Q)$ containing $x$. 
Then $T$ is an invariant open subset of $S$ such that $\partial T \subseteq \Sv \cap Q$. 
%By definition of quasi-Q-set, there is a closed transversal $\mu \subset T$ intersecting $Q$ infinitely many times. 
%Therefore $T$ is essential. 
%
By \cite[Theorem 3]{richards1963classification}, the open surface $T$ is homeomorphic to the resulting surface from a compact surface by removing a closed totally disconnected subset and so the end completion $T_{\mathrm{end}}$ of $T$ is a compact surface. 
%Denote by $r_0 \colon T_{\mathrm{end}} \to T_0$ a homeomorphism. 
Let $v_{\mathrm{end}}$ be the resulting flow on $T_{\mathrm{end}}$ from $v$ by adding singular points. 
%
%Denote by $T_{\mathrm{cpl}}$ the metric completion of $T$ and by $p_{\mathrm{cpl}} \colon T_{\mathrm{cpl}} \to \overline{T} \subseteq T \sqcup (\Sv \cap Q)$ the canonical quotient map as in \S~\ref{sec:mc}. 
Collapsing any connected component of $\Sv \cap Q$ into singletons, we obtain the resulting space $T'$ from $T \sqcup (\Sv \cap Q)$. 
Let $p_0 \colon T \sqcup (\Sv \cap Q) \to T'$ be the quotient map and 
%Then $T$ is a proper subset of both $T_{\mathrm{end}}$ and $T'$. 
%1
$\mathcal{E}(T)$ the set of ends of $T$. 
Applying Lemma~\ref{lem:end_collapse} to $A = \Sv \cap Q$,  there is a continuous extension $p \colon T_{\mathrm{end}} \to T'$ of the inclusion $T \subset T'$ such that any inverse images of singletons on $T'$ by $p$ are finite. 
Thus the compact metrizable space $T'$ can be obtained from $T_{\mathrm{end}}$ by identifying finitely many ends finitely many times. 
Set $\mathcal{E}_{\mathrm{id}} := \{ e \in \mathcal{E}(T) \mid \{e \} \neq p^{-1}(p(e)) \}$. 
%Let $\mathcal{E} \subset \mathop{\mathrm{Sing}}(v_{\mathrm{end}})$ be the finite union of ends in $T_{\mathrm{end}}$ each of whose inverse images of images by $p$ is not a singleton. 
By definition of quasi-Q-set, there is a closed transversal $\mu \subset T$ intersecting $Q$ infinitely many times. 

We claim that there are infinitely many pairwise disjoint non-recurrent orbits in $Q$ intersecting $\mu$. 
Indeed, by \cite[Theorem~A]{yokoyama2021poincare}, the closures in $T_{\mathrm{end}}$ of any non-recurrent orbits in $Q$ is a closed interval and so intersects $\mu$ at most finitely many times. 
Therefore any non-recurrent orbits in $Q$ intersect $\mu$ at most finitely many times. 
This means that there are infinitely many pairwise disjoint non-recurrent orbits in $Q$ intersecting $\mu$. 

From \cite[Theorem~A]{yokoyama2021poincare}, the $\omega$-limit and $\alpha$-limit sets of non-recurrent orbits in $Q$ are contained in $\Sv \cap Q$. 
Then there are infinitely many pairwise disjoint non-recurrent orbits $O_i$ in $Q \subset T$ intersecting $\mu$. 
%By the finiteness of $p^{-1}$, 
Then the inverse images $p^{-1}(p_0(\omega(O_i))) \subset T_{\mathrm{end}}$ are singletons which are singular points for any $i$. 
%For any $i$, the inverse image $p^{-1}(p_0(\omega(O_i))) \subset T_{\mathrm{end}}$ is a singular point. 
Since $T'$ is obtained from $T_{\mathrm{end}}$ by identifying finitely many ends finitely many times, there are infinitely many loops $\nu_i \subset T_{\mathrm{end}}$ such that either $\nu_i = \{ \omega_{v_{\mathrm{end}}}(O_{i_1}) \} \sqcup O_{i_1}$ or $\nu_i = \{ \omega_{v_{\mathrm{end}}}(O_{i_1}), \alpha_{v_{\mathrm{end}}}(O_{i_1}) \} \sqcup O_{i_1} \sqcup O_{i_2}$ for some $i_1, i_2$, and that any pair of distinct loops $\nu_i$ and $\nu_j$ intersects at most two points.
Since any loop $\nu_i$ intersects the closed transversal $\mu$, the loop $p(\nu_i)$ is essential in $T'$. 
Therefore any loop $\nu_i$ either is essential or bounds an open disk containing ends in $\mathcal{E}_{\mathrm{id}}$. 

We claim that there are not infinitely many pairwise disjoint loops in $\bigcup_i \nu_i$. 
Indeed, assume that there are infinitely many pairwise disjoint loops in $\bigcup_i \nu_i$. 
By renumbering $O_i$, we may assume that loops $\nu_i$ are pairwise disjoint. 
From the finite existence of genus and boundary components, there are infinitely many loops such that any pairs of distinct such loops bound invariant open annuli. 
Therefore there are a closed annulus $A$ whose boundary consists of two distinct loops $\nu'$ and $\nu''$ in $\bigcup_i \nu_i$ and a loop $\nu'''$ in $\bigcup_i \nu_i$ such that $A - \nu'''$ is a disjoint union of two invariant annuli $A' \sqcup A''$. 
By $\nu' \sqcup \nu'' \sqcup \nu''' \subset p^{-1}(p(Q)) = \omega_{v_{\mathrm{end}}}(x)$, we have $x \in A' \cap A'' = \emptyset$, which contradict $x \notin \emptyset$. 

Thus we may assume that $\nu_i \cap \nu_j \neq \emptyset$ for any $i \neq j$. 
We claim that there are no  infinitely many loops $\nu_i$ such that $\nu_i = \{ \omega_{v_{\mathrm{end}}}(O_{i_1}) \} \sqcup O_{i_1} \subset T_{\mathrm{end}}$ for some $i_1$. 
Indeed, assume that there are infinitely many loops $\nu_i$ such that $\nu_i = \{ \omega_{v_{\mathrm{end}}}(O_{i_1}) \} \sqcup O_{i_1}$ for some $i_1$. 
Since $v$ has time-reversal symmetric limit sets, by renumbering $O_i$, we may assume that there is a singular point $s$ such that $\nu_i = \{ s \} \sqcup O_{i}$ for any $i$. 
Then $\bigcup_{i} \nu_i = \{ s \} \sqcup \bigsqcup_{i} O_i$.   
From the finite existence of genus and boundary components, there are infinitely many loops such that any pairs of distinct orbits $O_i$ and $O_j$ equipped with $s$ bound invariant open disks. 
Therefore there are a closed disk $B$ whose boundary is $\nu' \sqcup \{ s \} \sqcup \nu''$ in $\bigcup_{i} \nu_i$ and an orbit $\nu'''$ in $\bigcup_{i} \nu_i$ such that $B - \nu'''$ is a disjoint union of two invariant disks $B' \sqcup B''$. 
By $\nu' \sqcup \nu'' \sqcup \nu''' \subset p^{-1}(p(Q)) = \omega_{v_{\mathrm{end}}}(x)$, we have $x \in B' \cap B'' = \emptyset$, which contradict $x \notin \emptyset$. 

Thus there are infinitely many loops $\nu_i$ such that $\nu_i = \{ \omega_{v_{\mathrm{end}}}(O_{i_1}), \alpha_{v_{\mathrm{end}}}(O_{i_1}) \} \sqcup O_{i_1} \sqcup O_{i_2}$ for some $i_1, i_2$. 
Since $T'$ is obtained from $T_{\mathrm{end}}$ by identifying finitely many ends finitely many times, we have that $p(\omega_{v_{\mathrm{end}}}(O_{i_1}))  = p(\alpha_{v_{\mathrm{end}}}(O_{i_1})) \in p(\mathcal{E}_{\mathrm{id}})$ for any $i$, and that $\bigcup_i \nu_i \cap \mathop{\mathrm{Sing}}(v_{\mathrm{end}}) = \bigcup_i \{ \omega_{v_{\mathrm{end}}}(O_{i_1}), \alpha_{v_{\mathrm{end}}}(O_{i_1}) \} \subseteq \mathcal{E}_{\mathrm{id}}$ are finite. 
%By renumbering of $O_i$, we may assume that $\nu_i = \{ \omega_{v_{\mathrm{end}}}(O_{2i}), \alpha_{v_{\mathrm{end}}}(O_{2i}) \} \sqcup O_{2i} \sqcup O_{2i-1} \subset T_{\mathrm{end}}$ for any $i$. 
%Since $T'$ is obtained from $T_{\mathrm{end}}$ by identifying finitely many ends finitely many times, the time-reversal symmetry of limit sets for $v$ implies that $\bigcup_i \nu_i \cap \mathop{\mathrm{Sing}}(v_{\mathrm{end}}) = \bigcup_i \{ \omega_{v_{\mathrm{end}}}(O_{i_1}), \alpha_{v_{\mathrm{end}}}(O_{i_1}) \}$ are finite. 
By renumbering $O_i$, there are singular points $\omega$ and $\alpha$ such that $\nu_i = \{ \omega, \alpha \} \sqcup O_{2i} \sqcup O_{2i-1} \subset T_{\mathrm{end}}$ for any $i$. 
Since any loop $\nu_i$ either is essential or bound an open disk containing ends in $\mathcal{E}_{\mathrm{id}}$, by renumbering $O_i$, we may assume that two loops in $\{ \nu_i \}$ are homotopic relative to $\{ \omega, \alpha \}$ in $T_{\mathrm{end}}$ and so that any two orbits in $\{ O_i \}$ are homotopic relative to $\{ \omega, \alpha \}$ in $T_{\mathrm{end}}$. 
%Then $\bigcup_{i} \nu_i = \{ \omega, \alpha \} \sqcup \bigsqcup_{i} O_i$.   
Therefore there are pairwise distinct integers $i, j, k$ and a closed disk $B$ whose boundary is $O_i \sqcup \{ \omega, \alpha \} \sqcup O_j$ such that $B - O_k$ is a disjoint union of two invariant disks $B' \sqcup B''$. 
By $O_i \sqcup O_j \sqcup O_k \subset p_0^{-1}(p(Q)) \setminus \mathop{\mathrm{Sing}}(v_{\mathrm{end}}) \subset \omega_{v_{\mathrm{end}}}(x) \cup \alpha_{v_{\mathrm{end}}}(x)$, we have $x \in B' \cap B'' = \emptyset$, which contradict $x \notin \emptyset$.

%Thus non-recurrent orbits $O_i$ are not pairwise disjoint, which contradicts the choice of $O_i$. 
Thus there are no quasi-Q-sets that consist of singular points and non-recurrent points.
\end{proof}

\subsection{Proof of Theorem~\ref{main:01}}

Let $v$ be a flow on a compact connected surface $S$. 
Suppose that $v$ has time-reversal symmetric limit sets. 
From Lemma~\ref{lem:01} it follows that the existence of non-closed recurrent points implies that the flow $v$ is either an irrational flow or a Denjoy flow. 
Thus we may assume that any recurrent orbits are closed. 
Then $S = \Cv \sqcup \mathrm{P}(v)$. 
If there is a limit cycle, then Lemma~\ref{lem:01} implies that the surface is either a torus or a Klein bottle and the complement of the limit cycle is a transverse annulus.
Thus we may assume that there are no limit cycles. 
By \cite[Lemma 3.4]{yokoyama2017decompositions}, the non-existence of limit cycles implies that $\Pv$ is open. 
Then $\overline{\mathrm{P}(v)} \subseteq \Sv \sqcup \mathrm{P}(v)$. 
Lemma~\ref{lem:03} implies the non-existence of quasi-Q-sets that consist of singular points and non-recurrent points. 
\cite[Theorem~A]{yokoyama2021poincare} implies that any $\omega$-limit and $\alpha$-limit sets of non-recurrent points are either nowhere dense subsets in $\partial \Sv$ or  limit quasi-circuits.
For any point $x$ whose $\omega$-limit set is a  limit quasi-circuit, Lemma~\ref{lem:01} implies that the connected component of $W^u(\gamma) - \gamma = W^s(\gamma) - \gamma$ containing $x$ is an invariant open transverse annulus. 
Each non-recurrent orbit is either a connecting quasi-separatrix with time-reversal symmetric limit sets or is contained in an invariant open transverse annulus with time-reversal symmetric limit sets which are  limit quasi-circuits. 

Conversely, if $v$ is either an irrational flow or a Denjoy flow, then $v$ has time-reversal symmetric limit sets. 
If the surface is either a torus or a Klein bottle and there is a limit cycle whose complement is a transverse annulus, then $v$ has time-reversal symmetric limit sets. 
Suppose that each recurrent orbit is closed and each non-recurrent orbit is either a connecting quasi-separatrix with time-reversal symmetric limit sets or is contained in an invariant open transverse annulus with time-reversal symmetric limit sets which are  limit quasi-circuits. 
Then $S = \Cv \sqcup \mathrm{P}(v)$ and each non-recurrent point has time-reversal symmetric limit sets. 
This means that $v$ has time-reversal symmetric limit sets.

\section{Examples and proof of Corollary~\ref{main:02}}

By definition of time-reversal symmetric limit sets, we have the following examples. 

\begin{example}\label{ex:min_pp}
{\rm
Any minimal flow and any pointwise periodic flow have time-reversal symmetric limit sets.
%Any minimal flows and pointwise periodic flows have time-reversal symmetric limit sets. 
}
\end{example}

A generic Hamiltonian flow on a closed surface has time-reversal symmetric limit sets.
%Any generic Hamiltonian flows on closed surfaces have time-reversal symmetric limit sets. 
More precisely, we have the following examples. 

\begin{example}\label{ex:ham}
{\rm
Structurally stable Hamiltonian $C^r$-vector fields {\rm(}$r \in \Z_{>0}${\rm)} on closed surfaces form an open dense subset in the set of Hamiltonian $C^r$-vector fields and have time-reversal symmetric limit sets \cite[Theorem~2.3.8, p. 74]{ma2005geometric}. 
}
\end{example}

By surgeries on local structures, we can obtain following flows with Wada-Lakes-like structures. 

\begin{example}\label{ex:wada}
{\rm
There is a flow with time-reversal symmetric limit sets on a torus, for which there is the $\omega$-limit set of a point whose complement consists of an invariant open periodic center disk and an invariant open transverse annulus. 
Indeed, let $v_0$ be a flow with one circle $C = \mathop{\mathrm{Sing}}(v_0)$ and the complement is an invariant open transverse annuls as on the upper left figure of Figure~\ref{wada02}. 
\begin{figure}
\begin{center}
\includegraphics[scale=0.2]{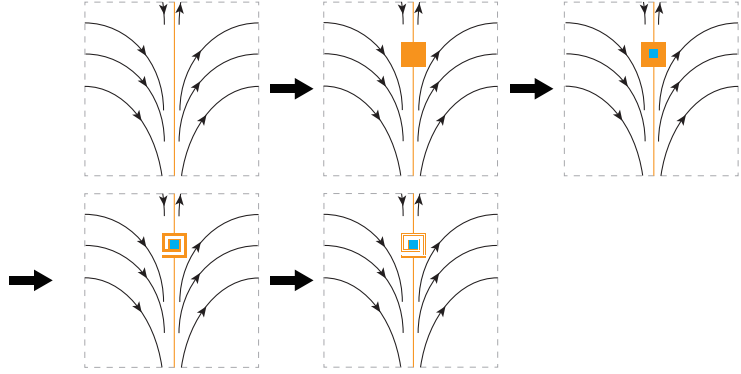}
\end{center}
\caption{A construction of a flow with time-reversal symmetric limit sets, for which there is the $\omega$-limit set of a point whose complement consists of an invariant open periodic center disk and an invariant open transverse annulus.}
\label{wada02}
\end{figure}
%(Notice that $v_0$ can be obtained from rational rotation on a torus by a turbulization along a closed transverse introduced by Reeb \cite{reeb1952certaines} (cf. \cite[Definition and Figure p.42]{Hector1983foliation_A}) and consider the resulting circle as singular points as on the upper middle of Figure~\ref{wada02}.)  
%Replace the limit cycle $C$ by a circle consisting of singular points, we obtain the resulting flow $v_0'$ as on the upper middle figure of Figure~\ref{wada02}. 
Replacing a singular point in $C$ into a rectangle $D$ consisting of singular points, we obtain the resulting flow $v_0'$ as on the upper middle figure of Figure~\ref{wada02}. 
Dividing the rectangle $D$ into 9 equal small rectangles and replacing the small middle rectangle $D_{22}$ into a center disk as on the middle left in Figure~\ref{wada01}, we obtain the resulting flow $v_1$ as on the upper right figure of Figure~\ref{wada02}. 
\begin{figure}
\begin{center}
\includegraphics[scale=0.045]{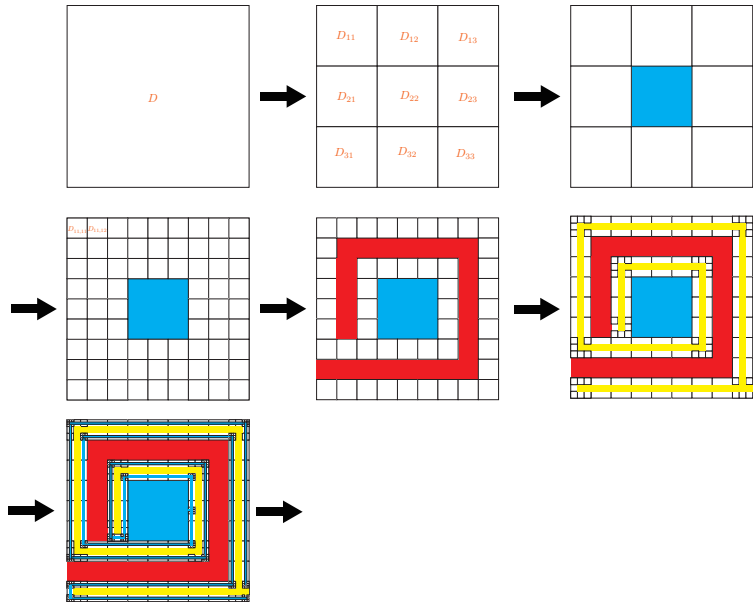}
\end{center}
\caption{A construction of a limit quasi-circuit whose complement consists of an invariant open periodic center disk and an invariant open transverse annulus by modifying the rectangle $D$.}
\label{wada01}
\end{figure}

As the construction of Lakes of Wada continua, digging a Lake as on the middle in Figure~\ref{wada01}, we obtain the resulting extended lake as on the lower left figure of Figure~\ref{wada02}. 
As the construction of Lakes of Wada continua, digging another lake as on the middle right in Figure~\ref{wada01}, we obtain the resulting extended lake as on the lower middle figure of Figure~\ref{wada02}. 
As the construction of Lakes of Wada continua, extending the center disk, and two Lakes infinitely many times, the resulting extended lakes $U_{1,\infty}$, $U_{2,\infty}$, and $U_{3,\infty}$ satisfy $\partial U_1 = \partial U_2 = \partial U_3$. 
Moreover, the three lakes $U_{1,\infty}$, $U_{2,\infty}$, and $U_{3,\infty}$ consists of rectangles as in Figure~\ref{wada01} and are formed as $U_i = \bigcup_{j = 0}^\infty U_{i,3j+i}$, where $U_{i,3j+i}$ is the extended area dug on the $3j+i_{\mathrm{th}}$ day. 
In addition, we can choose $U_{i,j}$ as $U_{i,3k+i} \cap \bigcup_{j<k} U_{i,3j+i} \subset U_{i,3k+i} \cap U_{i,3(k-1)+i}$ for any integer $k > 1$.
\begin{figure}
\begin{center}
\includegraphics[scale=0.75]{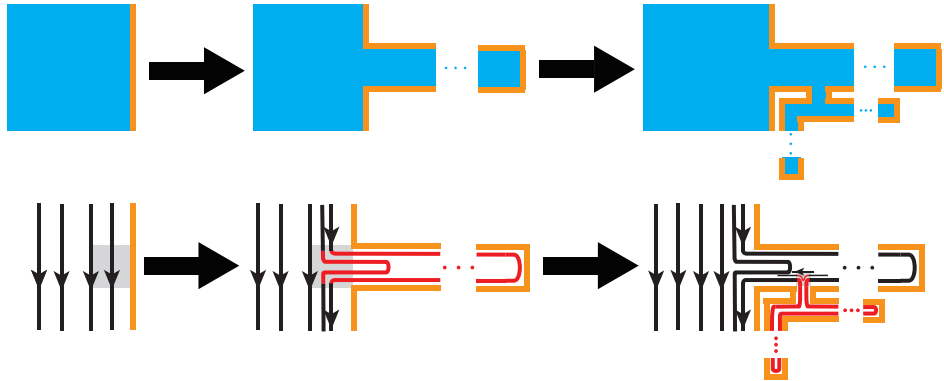}
\end{center}
\caption{The upper row represents deformations of the lake $\bigcup_{j = 0}^{k-1} U_{i,3j+i}$ on the $3k+i_{\mathrm{th}}$ day and the $3(k+1)+i_{\mathrm{th}}$ day, and the lower row represents associated deformations of flows along with extended areas dug on the $3k+i_{\mathrm{th}}$ day and the $3(k+1)+i_{\mathrm{th}}$ day.}
%Upper, deformations of the lake $\bigcup_{j = 0}^{k-1} U_{i,3j+i}$ on the $3k+i_{\mathrm{th}}$ day and the $3(k+1)+i_{\mathrm{th}}$ day; lower, associated deformations of flows along extended areas dug on the $3k+i_{\mathrm{th}}$ day and the $3(k+1)+i_{\mathrm{th}}$ day}
\label{wada_flow}
\end{figure}
In other words, when dig a Lake on the $3k+i_{\mathrm{th}}$ days, we replace the flow only on the union of the most recently dug area $U_{i,3(k-1)+i}$ and the new area $U_{i,3k+i}$ of the lake $\bigcup_{j = 0}^k U_{i,3j+i}$, by pinching an open trivial flow box in $U_{i,3(k-1)+i}$ and pulling orbit arcs in the flow box into $U_{i,3k+i}$ as on the left and middle of Figure~\ref{wada_flow}.

%As the construction of Lakes of Wada continua, d
Roughly speaking, digging a Lake as on the middle in Figure~\ref{wada01}, and pinching an open trivial flow box and pulling orbit arcs as on the middle in Figure~\ref{wada_flow}, we obtain the resulting flow $v_2$ as on the lower left figure of Figure~\ref{wada02}. 
More precisely, we construct such a flow by using seven kinds of insertions of continuous vector fields on squares with side length $3^{-n}$ for some $n \in \Z_{\geq 0}$ as in Figure~\ref{wada_flowbox01} and one replacement of an open trivial flow box as on the left of Figure~\ref{wada_flowbox01}. 
As the construction of Lakes of Wada continua, digging another Lake as on the middle right in Figure~\ref{wada01}, replacing one trivial flow box with side length $3^{-2}$ as in Figure~\ref{wada_flowbox01} and inserting seven kinds of squares with side length $3^{-2}$ as in Figure~\ref{wada_flowbox01}, we obtain the resulting flow $v_3$ as on the lower middle figure of Figure~\ref{wada02}. 
%As the construction of Lakes of Wada continua, extending the center disk, and two Lakes infinitely many times, the resulting flow $v_4$ is desired. 
By iterations of replacements and seven kinds of insertions infinitely many times, we can construct a flow with an invariant Wada-Lakes-like structure and make it smooth by Gutierrez's smoothing theorem~\cite{gutierrez1986smoothing} because any minimal sets are closed orbits. 

\begin{figure}
\begin{center}
\includegraphics[scale=0.75]{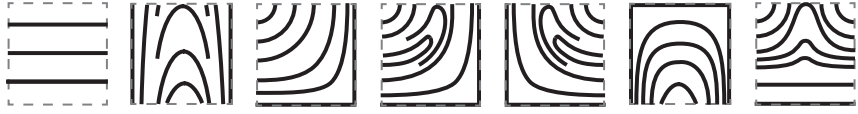}
\end{center}
\caption{Seven kinds of continuous vector fields on squares with side length $3^{-n}$ for some $n \in \Z_{\geq 0}$ to construct a flow with a Wada-Lakes-like structure.}
\label{wada_flowbox01}
\end{figure}
\begin{figure}
\begin{center}
\includegraphics[scale=0.65]{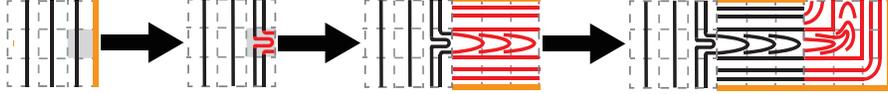}
\end{center}
\caption{The replacement of an open trivial flow box and examples of insertions of squares with side length $3^{-n}$ for some $n \in \Z_{\geq 0}$.}
\label{wada_flowbox02}
\end{figure}% 

In addition, increasing center disks and transverse annuli, we can realize arbitrarily finite numbers of connected components of the complement of the $\omega$-limit set of a point. 
Moreover, replacing center disks into periodic M\"obius bands, and cutting pairs of center disks and gluing the boundaries, the connected components can be not only center disks and open transverse annuli but also periodic M\"obius bands and periodic annuli. 
%
%Furthermore, if we replace $C - \{ x \}$ in $v_2$ with singular points then the resulting flow has time-reversal symmetric limit sets. 
}
\end{example}

\begin{example}\label{ex:wada04}
{\rm
There is a flow with time-reversal symmetric limit sets, for which there is a limit quasi-circuit whose complement consists of an invariant open periodic center disk and an invariant open transverse annulus on a closed orientable surface $\Sigma_2$ of genus two. 
Indeed, let $w_0$ be a flow on the torus $\T^2$ with one limit cycle and the complement is an invariant open transverse annuls as on the upper left figure of Figure~\ref{wada04}. 
Replace a point $x$ in the limit cycle $C$ by a closed interval $I_x$ consisting of singular points, we obtain the resulting flow $w_1$ with $\omega_{w_1}(C) = \alpha_{w_1}(C)= I_x = \mathop{\mathrm{Sing}}(w_1)$ such that the union $I_x \sqcup (C - \{ x \})$ is the $\omega$-limit and $\alpha$-limit set of any point in the complement $\T^2 - (I_x \sqcup (C - \{ x \}))$ as on the upper middle figure of Figure~\ref{wada04}. 
Replacing the two singular points on the boundary of the interval $I_x$ into closed disks $D_-$ and $D_+$ consisting of singular points respectively, we obtain the resulting flow $w_2$ as on the upper right figure of Figure~\ref{wada04}. 
\begin{figure}
\begin{center}
\includegraphics[scale=0.25]{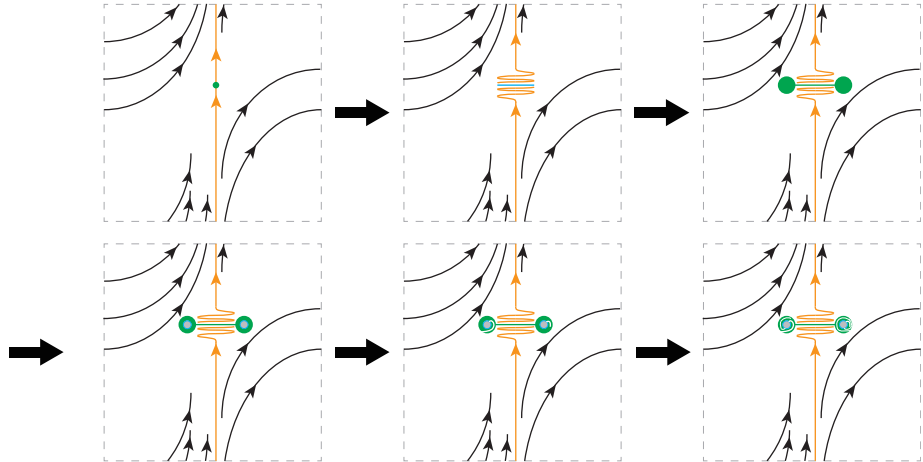}
\end{center}
\caption{Construction of a flow with time-reversal symmetric limit sets, for which there is a limit quasi-circuit whose complement consists of an invariant open transverse annulus.}
\label{wada04}
\end{figure}
Removing two invariant open disks $D_-'$ and $D_+'$ of half size in $D_-$ and $D_+$ as on the lower left figure of Figure~\ref{wada04} and pasting the new circular boundary components $\partial D_-'$ and $\partial D_+'$, we obtain that the resulting surface is an orientable closed surface $\Sigma_2$ of genus two and that the singular point set of the resulting flow $w_3$ on $\Sigma_2$ is the disjoint union of closed annulus $A$ and the open interval $\operatorname{int} I_x$. 
This closed annulus $(D_- - D_-') \cup (D_- - D_-')$ can be identified with the set difference $D - \mathrm{int} D_{22}$ as in Figure~\ref{wada01}. 
As the construction of the previous example, extending two Lakes infinitely many times, the resulting flow $w_4$ is desired. 

In addition, increasing center disks and transverse annuli, we can realize arbitrarily finite numbers of connected components of the complement of the quasi-circuit. 
Moreover, replacing center disks into periodic M\"obius bands, and cutting pairs of center disks and gluing the boundaries, the connected components can be not only center disks and open transverse annuli but also periodic M\"obius bands and periodic annuli. 
%
%Furthermore, if we replace $C - \{ x \}$ in $v_2$ with singular points then the resulting flow has time-reversal symmetric limit sets. 
}
\end{example}

\subsection{Proof of Corollary~\ref{main:02}}

From the constructions in Example~\ref{ex:wada}, consider a flow on a torus constructed in Figure~\ref{wada02} and as on the upper left figure of Figure~\ref{wada07}. 
\begin{figure}
\begin{center}
\includegraphics[scale=0.2]{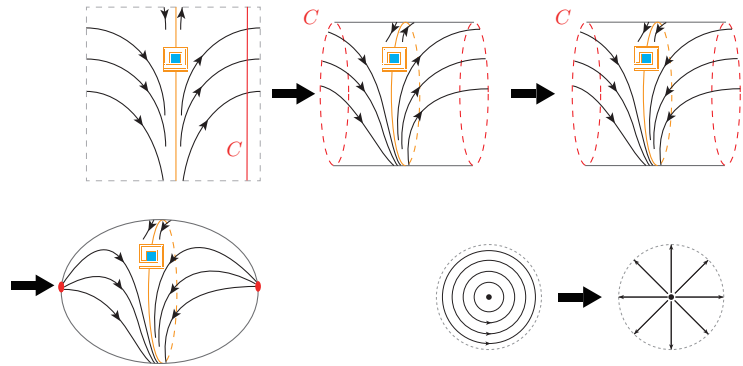}
\end{center}
\caption{Operations of cutting closed transversals, reversing the flow direction on the right part of a flow, and collapsing the new boundary components and a replacement of a center disk into a source disk.}
\label{wada07}
\end{figure}
Roughly speaking, by cutting closed transversals, reversing a part of a flow, and collapsing the new boundary components, we can construct flows on a sphere with Lakes of Wada attractors and with an arbitrarily large number of complementary domains as in Figure~\ref{wada07}. 

More precisely, consider a flow $v$ on a torus with time-reversal symmetric limit set, for which there is the $\omega$-limit set of a point whose complement consists of $N-1$ invariant open periodic center disks and an invariant open transverse annulus on a torus constructed in Example~\ref{ex:wada}. 
Replace any invariant open periodic center disks with invariant open source disks. 
Here a source disk is a flow on an open disk defined by a vector field $X(x,y) = (x,y)$ up to topological equivalence as on the lower right figure of Figure~\ref{wada07}. 
Take an essential closed transversal $C$ as on the upper left figure of Figure~\ref{wada07}. 
Remove $C$, we obtain the resulting open annulus $\T^2 - C$ and the flow $v'$ on it as on the upper middle figure of Figure~\ref{wada07}. 
To construct an attractor, reversing the flow direction on the right part of a flow as on the upper right figure of Figure~\ref{wada07}. 
Collapsing the new boundary components and adding two sources, we have the resulting sphere and the desired resulting flow as on the lower left figure of Figure~\ref{wada07}.

\bibliographystyle{abbrv}
\bibliography{../yt20211011}

\end{document}